\documentclass[12pt]{article}
\usepackage{graphicx}
\usepackage{epstopdf}
\usepackage{subfigure}
\usepackage{color}
\usepackage[svgnames]{xcolor}
\usepackage[english]{babel}
\usepackage{amssymb,amsmath,amsthm}
\usepackage{amsmath,amssymb}
\usepackage{array}
\usepackage{multirow}
\usepackage{amssymb}
\usepackage{isomath}
\usepackage{blkarray}
\usepackage{cancel}
\usepackage{extarrows}
\usepackage{lscape}

\usepackage{amsfonts}
\usepackage{booktabs}
\usepackage{algorithm}
\usepackage{algorithmic}
\usepackage{pifont}
\usepackage{bm} 
\usepackage{authblk}
\usepackage{appendix}
\usepackage{mathtools}

\usepackage{caption} 
\usepackage{graphicx}
\usepackage{subfigure}
\usepackage{tabularx}
\usepackage[colorlinks=true, linkcolor=red, citecolor=green]{hyperref}
\usepackage{url}

\usepackage{amsmath}

\newtheorem{thm}{Theorem}[section]
\newtheorem{defi}{Definition}[section]

\newtheorem{lem}[thm]{Lemma}

\newtheorem{coro}{Corollary}[section]

\newtheorem{Remark}{Remark}[section]

\newtheorem{Problem}{Problem}[section]

\title{Partial-Twuality Polynomials of Paired Matrices}

\author[a]{Xiaoxiang Yu}
\author[a]{Rong-Xia Hao$^{*}$}
\author[a]{Jianbing Liu\footnote{Corresponding authors: rxhao@bjtu.edu.cn,  jbliu1@bjtu.edu.cn}}
\affil[a]{\footnotesize School of Mathematics and Statistics, Beijing Jiaotong University, Beijing 100044, P. R. China}
\date{}

\begin{document}
\baselineskip 0.65cm
\maketitle

\vspace{-1.2em}
\begin{abstract}
Gross, Mansour, and Tucker~[European Journal of Combinatorics, 95 (2021): 103329] introduced the \emph{partial-twuality polynomials} of ribbon graphs. Recently, Deng, Jin, and Yan generalized the partial-twuality polynomials to the framework of matrix algebra and investigated several of their basic properties. They asked whether there exist matrix operations, called partial duality $\delta$ and partial Petrie duality $\tau$, on pairs $(M,A)$, where $M$ is a square matrix whose rows and columns are indexed by a finite set $V$ and $A\subseteq V$, such that 
$\delta^2=\tau^2=(\delta\tau)^3=id$ and the exponent of the partial-twuality polynomials coincides with some parameter of the matrix obtained by applying \(\bullet\) to \((M, A)\). In this paper, we introduce a paired-matrix framework for partial-twuality polynomials over the binary field $\mathbb{GF}(2)$. We prove that there exist two local operations \(\delta\) and \(\tau\) on \(\bigl((M,I_{|V|}),A\bigr)\) satisfying $\delta^2=\tau^2=(\delta\tau)^3=id$ and $P_{\langle \bullet \rangle}((M,I_{|V|}),z)=P_{\langle \bullet \rangle}(M,z)$ for $\bullet \in \{\delta, \tau, \delta\tau, \tau\delta, \delta\tau\delta\}$, thereby answering their question affirmatively. Finally, we establish a recurrence relation for the partial \(\langle\delta\tau\delta\rangle\)-polynomial with respect to an edge. This recurrence enables the computation of the partial \(\langle\delta\tau\delta\rangle\)-polynomial for certain bouquets, simple graphs, and simple signed graphs.

\vspace{3mm}

\noindent\textbf{Keywords:}  Partial-twuality polynomial; ribbon graph; paired matrix; partial duality; partial Petrial duality; recurrence relation

\noindent\textbf{2020 MSC:} 05C10, 05C30, 05C31, 57M15
\end{abstract}

\section{Introduction}
A ribbon graph~\cite{2002_Ribbon graph} is a surface with boundary representing a graph cellularly embedded in a surface.~The Petrie operation was introduced by Wilson in 1979~\cite{Introduction_Petrial}. Geometric duality \(\delta\) and Petrial duality \(\tau\) are involutions on ribbon graphs, while their composition \(\delta\tau\) has order three. Consequently, these operations generate an action of a group isomorphic to the symmetric group \(S_3\). Its five nonidentity elements, $\delta, \tau, \delta\tau, \tau\delta, \text{and } \delta\tau\delta,$
are collectively called \emph{twualities}~\cite{Twuality}. Chmutov~\cite{Partial} introduced partial duality for ribbon graphs, and Ellis-Monaghan and Moffatt~\cite{Partial_Twuality} extended this notion to partial twualities.

In 2021, Gross, Mansour, and Tucker~\cite{2021_II} introduced the \emph{partial-$\langle \bullet \rangle$ polynomial}, which enumerates the partial-\(\bullet\) twualities of a ribbon graph according to their Euler genera. More precisely, for \(\bullet\in\{\delta,\tau,\delta\tau,\tau\delta,\delta\tau\delta\}\), the polynomial records the values \(\varepsilon(G^{\bullet\mid A})\), where \(A\) ranges over the subsets of the edge set of \(G\).

\begin{defi}\emph{(\cite{2021_II})}\label{poly_Petrial}
For $\bullet \in \{\delta, \tau, \delta\tau, \tau\delta, \delta\tau\delta\}$, the \emph{partial-$\langle \bullet \rangle$ polynomial} of
any ribbon graph $G$ is the generating function 
$$^{\partial}{\varepsilon^{\bullet}_{G}}(z)=\sum\limits_{A\subseteq E(G)}z^{\varepsilon{(G^{\bullet|A})}}$$
\noindent that enumerates partial-$\bullet$ of $G$ by Euler genus. 
\end{defi}

A \textit{bouquet} is a ribbon graph with a single vertex. In  \cite{Matroids_Yan1} and \cite{Matroids_Yan}, Yan and Jin introduced the notations of partial-twuality polynomials for delta-matroids. 
For normal binary delta-matroids, which serve as a generalization of bouquets, these polynomials are completely determined by signed intersection graphs. 
Cheng \cite{fourterm_chengzhiyun} built a relationship between the Euler genus of an orientable bouquet and the rank of the adjacency matrix of its intersection graph, and generalized the partial-$\langle \delta \rangle$ polynomial from intersection graphs to all simply graphs without referring to embeddings.

Loop complementation on delta-matroids~\cite{pivotandloocomplementation} is compatible with the partial Petrial operation on ribbon graphs~\cite{Matroids_Yan}. Matrix loop complementation and its relation to delta-matroids were studied systematically in~\cite{pivotandloocomplementation}. There is also a close connection between the partial duality operation on ribbon graphs, twist on delta-matroids, and the principal pivot transform of matrices for any nonsingular principal minor \cite{twistpivot_Moffatt}. 
Yan and Jin studied partial-twuality polynomials for delta-matroids as analogues of those for ribbon graphs \cite{Matroids_Yan}. 

Recently, Deng, Jin and Yan \cite{Matrice_Twualitypolynomial} proved that the Euler genus of a bouquet under a partial-twuality operation is a linear combination of ranks and/or coranks of the adjacency matrices of a related signed intersection graph, and they gave the \emph{partial-$\langle \bullet \rangle$ polynomial} of a square matrix as a generalization of the partial-$\langle \bullet \rangle$ polynomial of a bouquet as follows.
\begin{defi} \emph{\cite{Matrice_Twualitypolynomial} }\label{def:matrix_partial_twuality}
	Let $M$ be a square matrix over a field, with rows and columns indexed by a finite set $V$. For $\bullet \in \{\delta, \tau, \delta\tau, \tau\delta, \delta\tau\delta\}$, the \emph{partial-$\langle \bullet \rangle$ polynomial} of $M$ is defined as
	\[P_{\langle \bullet \rangle}(M, z) = \sum_{A \subseteq V} z^{r_{\langle \bullet \rangle}(M, A)},\]
	where the exponent function $r_{\langle \bullet \rangle}(M, A)$ is given by
	\vspace{-0.6em}
	\begin{align*}
		r_{\langle \delta \rangle}(M, A) &= \operatorname{rank}(M[A]) + \operatorname{rank}(M[A^c]), \\
		r_{\langle \tau \rangle}(M, A) &= \operatorname{rank}(M + I_A), \\
		r_{\langle \delta \tau \rangle}(M, A) &= \operatorname{rank}((M + I_A)[A]) + \operatorname{rank}(M[A^c]), \\
		r_{\langle \tau \delta \rangle}(M, A) &= \operatorname{rank}(M + I_A) - \operatorname{corank}(M[A]), \\
		r_{\langle \delta\tau\delta\rangle}(M, A) &= \operatorname{rank}(M) - \operatorname{corank}((M + I_A)[A]). 
	\end{align*}
	
	\vspace{-0.3em}
\noindent Here $M[A]$ denotes the principal submatrix of $M$ on $A \subseteq V$, and $A^c = V \setminus A$. 
\end{defi}
Deng, Jin and Yan~\cite{Matrice_Twualitypolynomial} also established several fundamental properties of these polynomials, including product formulas, leaf-reduction recursions, and interpolation properties. They further proved that the partial-\(\langle\delta\rangle\) polynomial is invariant under matrix pivoting. Motivated by the research void regarding singular matrices in $\delta$ operations, for a square matrix $M$ with rows and columns indexed by a finite set $V$ and $A\subseteq V$, they posed the following problem.
\begin{Problem}\emph{\cite{Matrice_Twualitypolynomial}}\label{Problem1}
	Do there exist two matrix operations \(\delta\) and \(\tau\) on pairs \((M, A)\), where \(M\) is a square matrix and \(A\) is a subset of its index set, that satisfy the relations \(\delta^2 = \tau^2 = (\delta \tau)^3 = \text{id}\), generating an \(S_3\) action on such pairs, and such that for each \(\bullet \in \{\delta, \tau, \delta \tau, \tau \delta, \tau \delta \tau\}\), the exponent \(r_{\langle \bullet \rangle}(M, A)\) coincides with some parameter of the matrix obtained by applying \(\bullet\) to \((M, A)\)\emph{?}
\end{Problem}



In this paper, for any $n\times n$ matrix $M$, 
we prove that there exist two local operations \(\delta\) and \(\tau\) on \(\bigl((M,I_{|V|}),A\bigr)\) satisfying $\delta^2=\tau^2=(\delta\tau)^3=id$ (Theorem \ref{Condition1}) and $P_{\langle \bullet \rangle}((M,I_{|V|}),z)=P_{\langle \bullet \rangle}(M,z)$ for $\bullet \in \{\delta, \tau, \delta\tau, \tau\delta, \delta\tau\delta\}$ (Theorem  \ref{Th:polyoftwuality}), which gives an answer of Problem \ref{Problem1}. Finally, we establish three recurrence relations for the partial-$\langle \delta\tau\delta \rangle$ polynomial of simple signed graphs associated with an edge (Theorems \ref{recurrence1}-\ref{recurrence3}). 

The rest of this paper is organized as follows. In section $2$, we introduce the necessary notations and foundational properties used throughout the paper. In section $3$, proofs of Theorems \ref{Condition1} and \ref{Th:polyoftwuality} are given. In section $4$, we devote to the proofs of Theorems \ref{recurrence1}-\ref{recurrence3}.

\section{Preliminaries}

Throughout this paper, all graphs are finite and undirected, and all matrices are over the binary field $\mathbb{GF}(2)$, unless stated otherwise. This section introduces the definitions, notation, and lemmas used in the proofs of the main results.

\subsection{Ribbon graph and simple signed graph}
A {\em ribbon graph} $G$ is a surface with boundary represented as the union of two sets of closed topological discs, called vertex-discs $V(G)$ and edge-ribbons $E(G)$, satisfying the following three conditions \cite{2002_Ribbon graph}: (1) vertex-discs and edge-ribbons intersect in disjoint line segments; 
(2) each such line segment lies on the boundary of precisely one vertex-disc and precisely one edge-ribbon; (3) every edge-ribbon contains exactly two such line segments.


For a ribbon graph \(G\) and \(A\subseteq E(G)\), the \emph{partial dual} \(G^{\delta\mid A}\) is obtained by gluing a disc to \(G\) along each boundary component of the spanning ribbon subgraph \((V(G),A)\), deleting the interiors of the original vertex-discs, and leaving the edge-ribbons unchanged~\cite{Partial}. The geometric dual of \(G\) is \(G^{\delta\mid E(G)}\). The \emph{Petrie} of \(G\) is obtained by detaching one end of every edge-ribbon from its incident vertex-disc, introducing a half-twist, and then reattaching it \cite{Introduction_Petrial}. If this operation is performed only on the edges in \(A\subseteq E(G)\), the resulting ribbon graph is the \emph{partial Petrial} \(G^{\tau\mid A}\).\\ 
\indent A loop of a bouquet is \emph{twisted} (resp.~\emph{untwisted}) if its edge-ribbon is homeomorphic to a M\"obius band (resp.~an annulus). The \emph{signed rotation} of a bouquet is the cyclic order of its half-edges at the vertex-disc, together with signs assigned to the half-edges: the two half-edges of an untwisted loop have the same sign, whereas those of a twisted loop have different signs. Two loops of a bouquet are \emph{interlaced} if their half-edges alternate in the signed rotation.\\
\indent A \emph{signed graph} is a graph whose vertices are assigned signs \(+\) or \(-\). A \emph{simple signed graph} is a signed graph whose underlying graph is simple. The \emph{intersection graph} \(I(B)\) of a bouquet \(B\) has the loops of \(B\) as its vertex set, with two vertices adjacent precisely when the corresponding loops are interlaced. The \emph{signed intersection graph} \(SI(B)\) is obtained from \(I(B)\) by assigning sign \(+\) to an untwisted loop and sign \(-\) to a twisted loop. Its adjacency matrix, denoted by \(\operatorname{adj}(SI(B))\), is obtained from \(\operatorname{adj}(I(B))\) by placing a \(1\) in the diagonal position corresponding to every negative vertex. Yan and Jin~\cite{Matroids_Yan} proved that the partial-\(\langle\bullet\rangle\) polynomials of bouquets are completely determined by their signed intersection graphs.

Following Moffatt \cite{graft}, a \emph{graft} \((G,L)\) consists of a simple graph \(G\) together with a subset \(L\subseteq V(G)\). Its adjacency matrix~$\text{adj}{(G,L)}$ is the matrix with rows and columns indexed by $V(G)$ defined by
	\[
	\text{adj}{(G,L)}{(u,v)} = 
	\begin{cases}
		1, & \text{if } uv \in E(G) \text{ and } u \neq v, \\
		1, & \text{if } u = v \in L, \\
		0, & \text{otherwise},
	\end{cases}
	\]
over $\mathbb{GF}(2)$. Deng, Jin, Yan \cite{Matrice_Twualitypolynomial} generalized the partial-$\langle \bullet \rangle$ polynomial of bouquets to grafts by  restricting the arbitrary matrix $M$ in Definition \ref{def:matrix_partial_twuality} to the adjacency matrix of a graft. 

For a graph \(G\) and a vertex \(v\in V(G)\), let \(N_G(v)\) denote the set of vertices adjacent to \(v\). The \emph{local complementation} of a simple graph \(G\) at \(v\), denoted by \(G\ast v\), is obtained by toggling all adjacencies between distinct vertices of \(N_G(v)\)~\cite{Kotzig1968}. Let $G$ be a simple signed graph with vertex subset $L \subseteq V(G)$. For a graft \((G,L)\) and any \(v\in V(G)\), define local complementation at \(v\) by
\[(G,L)\ast v:=(G\ast v,\,L\triangle N_G(v)).\]
In the original definition in~\cite{graft}, local complementation is applied at a negative vertex because of its connection with matrix pivoting. Here we allow the operation at both positive and negative vertices and use it solely as a graph operation.

\begin{Remark}\label{keylemma}
Let \((G,L)\) be a graft and let \(v\in V(G)\). Order the vertices with \(v\) first, and write
\[
\operatorname{adj}(G,L)=
\begin{bmatrix}
a & B^{T}\\
B & C
\end{bmatrix},
\]
where \(B\) is the neighborhood column vector of \(v\). Then
\[\operatorname{adj}((G,L)\ast v)=
\begin{bmatrix}
a & B^{T}\\
B & C+BB^{T}
\end{bmatrix}.\]
\end{Remark}

\subsection{Matrices and principal pivot transforms}

For \(n\in\mathbb N\), let \(I_n\) denote the \(n\times n\) identity matrix. If \(V\) is a finite index set and \(A\subseteq V\), let \(I_A\) denote the diagonal \(|V|\times |V|\) matrix with diagonal entry \(1\) on \(A\) and \(0\) otherwise.

We use standard delta-matroid terminology. Let \(E\) be a finite set, and let \(C\) be a symmetric \(|E|\times|E|\) matrix over \(\mathbb{GF}(2)\), with rows and columns indexed by \(E\). Define
\[D(C)=(E,\mathcal F(C)),\qquad
\mathcal F(C):=\{A\subseteq E:C[A]\text{ is nonsingular}\}.\] 
We adopt the convention that the empty matrix is non-singular. For $X \subseteq E$, the \emph{twist} of $D(C)$ with respect to $X$, denoted by $D(C)\ast  X$, is given by $D(C)\ast X \coloneqq (E, \{A \triangle X : A \in \mathcal{F}(C)\}).$ 
A delta-matroid \(D\) on \(E\) is \emph{binary} if \(D\ast X=D(C)\) for some \(X\subseteq E\) and some symmetric matrix \(C\) over \(\mathbb{GF}(2)\). In particular, a normal binary delta-matroid has a direct representation of the form \(D(C)\). Every ribbon-graphic delta-matroid is binary~\cite{RepresentabilityofDeltamatroids}.~For a bouquet \(B\), the delta-matroid associated with \(B\) is represented by the adjacency matrix of its signed intersection graph~\cite{Matrice_Twualitypolynomial}.

Let \(X\subseteq E\), and suppose that \(C\) is written in block form, with rows and columns ordered by \(X\) and \(E\setminus X\), as $C=
\begin{bmatrix}
\alpha & \beta\\
\gamma & \mu
\end{bmatrix}.$ 
If \(C[X]=\alpha\) is nonsingular, the \emph{principal pivot transform} of \(C\) with respect to \(X\), denoted by \(C\ast X\), is
\[C\ast X=
\begin{bmatrix}
\alpha^{-1} & \alpha^{-1}\beta\\
-\gamma\alpha^{-1} & \mu-\gamma\alpha^{-1}\beta
\end{bmatrix}.\]
Over \(\mathbb{GF}(2)\), the displayed minus signs agree with plus signs. Bouchet~\cite{RepresentabilityofDeltamatroids} proved that
\[D(C\ast X)=D(C)\ast X.\]
Thus, the principal pivot transform realizes the twist operation whenever the relevant principal submatrix is nonsingular. If \(C[X]\) is singular, then \(C\ast X\) is not defined.

We conclude with the standard Schur-complement rank formula.

\begin{lem}\emph{\cite{SchurComplement}}\label{calculaterank}
	Suppose that 
	$M = \begin{bmatrix} 
		A & B \\ C & D 
	\end{bmatrix}$.
	If $A$ is a non-singular matrix, then 
	\[\operatorname{rank}(M)=\operatorname{rank}(A)+\operatorname{rank}(D-CA^{-1} B).\]
\end{lem}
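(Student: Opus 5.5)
The plan is to prove the rank formula by block Gaussian elimination, writing $M$ as a product of two invertible block-triangular matrices around a block-diagonal matrix. Since $A$ is nonsingular, it is square, say $k\times k$. Let $D$ be $p\times q$, so that $B$ is $k\times q$ and $C$ is $p\times k$. The argument uses only field operations, so it holds over $\mathbb{GF}(2)$, where the minus signs may be read as plus signs, as well as over any other field.

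The first step is to verify the factorization
\[
\begin{bmatrix} A & B\\ C & D\end{bmatrix}
=
\begin{bmatrix} I_k & 0\\ CA^{-1} & I_p\end{bmatrix}
\begin{bmatrix} A & 0\\ 0 & D-CA^{-1}B\end{bmatrix}
\begin{bmatrix} I_k & A^{-1}B\\ 0 & I_q\end{bmatrix}.
\]
This is a direct block multiplication. The product of the first two factors is
$\begin{bmatrix} A & 0\\ C & D-CA^{-1}B\end{bmatrix}$. Multiplying this on the right by the third factor gives top-right block $AA^{-1}B=B$ and bottom-right block $CA^{-1}B+D-CA^{-1}B=D$, so the product is $M$.

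The second step is to observe that the two outer factors are block unitriangular. Each has determinant $1$, with explicit inverses obtained by negating the off-diagonal block. Multiplying on either side by an invertible matrix does not change rank, so $\operatorname{rank}(M)$ equals the rank of the middle block-diagonal matrix.

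The final step is that the rank of a block-diagonal matrix is the sum of the ranks of its diagonal blocks. Its row space is the direct sum of the row spaces of the two blocks, since they are supported on disjoint sets of coordinates. This gives $\operatorname{rank}(M)=\operatorname{rank}(A)+\operatorname{rank}(D-CA^{-1}B)$, where $\operatorname{rank}(A)=k$.

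None of these steps is a real obstacle; the only point requiring care is checking the block multiplication with the correct dimensions. If the diagonal blocks of $M$ are principal submatrices with rows and columns ordered as in the paper's later applications, a permutation similarity first puts $M$ into the displayed block form, and this does not affect rank.
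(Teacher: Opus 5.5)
Your proof is correct. The block factorization into an invertible lower unitriangular factor, the block-diagonal matrix with blocks $A$ and $D-CA^{-1}B$, and an invertible upper unitriangular factor gives the rank formula over any field, including $\mathbb{GF}(2)$. The paper gives no proof of this lemma; it simply quotes it from the literature as the standard Schur-complement rank formula, and your block Gaussian elimination argument is the standard proof of that fact.
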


\section{The $\delta$ and $\tau$ operations on paired matrices}

In this section, we generalize partial duality \(\delta\) and partial Petrial duality \(\tau\) from bouquets to paired matrices. For every \(n\times n\) matrix \(M\), we define local operations \(\delta\) and \(\tau\) on an \(n\times2n\) paired matrix associated with \(M\). These operations provide an affirmative answer to Problem~\ref{Problem1}.

Let \(M\) and \(N\) be \(n\times n\) matrices. The paired matrix \((M,N)\) denotes the block matrix \([M\mid N]\), with left block \(L(M,N):=M\) and right block \(R(M,N):=N\). Suppose that the rows and columns of \(M\) and \(N\) are indexed by a finite set \(V\) in a fixed order. For \(x\in V\), let \(M_x\) and \(N_x\) be the columns indexed by \(x\) in \(M\) and \(N\), respectively. We call \((L,R)_x:=[M_x\mid N_x]\) the \emph{column pair of $(L,R)$ at \(x\)}.

\begin{defi}\label{defdeltatau}
Let \((L,R)\) be a paired matrix indexed by \(V\). For \(x\in V\), define local operations \(\delta_x\) and \(\tau_x\) by
\[
\bigl(\delta_x(L,R)\bigr)_y=
\begin{cases}
(R_y,L_y), & y=x,\\
(L_y,R_y), & y\ne x,
\end{cases}
\qquad
\bigl(\tau_x(L,R)\bigr)_y=
\begin{cases}
(L_y+R_y,R_y), & y=x,\\
(L_y,R_y), & y\ne x.
\end{cases}
\]
For a word \(w=w_1w_2\cdots w_k\) over \(\{\delta,\tau\}\), define
\[
w_x:=w_{1,x}\circ w_{2,x}\circ\cdots\circ w_{k,x},
\]
so that the rightmost operation acts first. In particular,
\((\delta\tau)_x=\delta_x\circ\tau_x\), \((\tau\delta)_x=\tau_x\circ\delta_x\), and \((\delta\tau\delta)_x=\delta_x\circ\tau_x\circ\delta_x\). For \(A\subseteq V\), set
\[
\delta_A:=\prod_{x\in A}\delta_x,\qquad
\tau_A:=\prod_{x\in A}\tau_x,\qquad
\bullet_A:=\prod_{x\in A}\bullet_x,
\]
where \(\bullet\in\{\delta,\tau,\delta\tau,\tau\delta,\delta\tau\delta\}\).
\end{defi}

\noindent\textbf{Note.} The products in Definition~\ref{defdeltatau} are well-defined because local operations at distinct indices act on disjoint column pairs and therefore commute.

\begin{thm}\label{Condition1}
	For every $A\subseteq V$, 
	\[
	\delta_A^2=\tau_A^2=(\delta_A\tau_A)^3=id.
	\]
\end{thm}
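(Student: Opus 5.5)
The plan is to reduce everything to a single index and then to a computation with $2\times 2$ matrices over $\mathbb{GF}(2)$. By the Note after Definition~\ref{defdeltatau}, the local operations $\delta_x,\tau_x$ at distinct indices act on disjoint column pairs and therefore commute. Moreover, $\delta_y$ and $\tau_y$ leave every column pair with index $x\neq y$ unchanged. Hence
\[
\delta_A\tau_A=\prod_{x\in A}(\delta_x\tau_x),\qquad \delta_A^2=\prod_{x\in A}\delta_x^2,\qquad \tau_A^2=\prod_{x\in A}\tau_x^2,\qquad (\delta_A\tau_A)^3=\prod_{x\in A}(\delta_x\tau_x)^3 .
\]
Here we rearrange the factors freely, using commutativity of operators at distinct indices. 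So it suffices to prove $\delta_x^2=\tau_x^2=(\delta_x\tau_x)^3=id$ for a single $x\in V$. The case $A=\emptyset$ is trivial.

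For a fixed $x$, each of $\delta_x$ and $\tau_x$ changes only the column pair $(L_x,R_x)$, and does so linearly. Writing the pair as a row vector $(L_x,R_x)$ of columns, the action is right multiplication by a $2\times 2$ matrix over $\mathbb{GF}(2)$:
\[
\delta_x:\ (L_x,R_x)\mapsto (L_x,R_x)\begin{bmatrix}0&1\\1&0\end{bmatrix},\qquad
\tau_x:\ (L_x,R_x)\mapsto (L_x,R_x)\begin{bmatrix}1&0\\1&1\end{bmatrix}.
\]
Indeed, $(L_x,R_x)\begin{bmatrix}1&0\\1&1\end{bmatrix}=(L_x+R_x,R_x)$, as required. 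Call these matrices $D$ and $T$. Since composition $\delta_x\circ\tau_x$ applies $\tau_x$ first, it corresponds to right multiplication by $TD$. The identities therefore reduce to $D^2=T^2=I_2$ and $(TD)^3=I_2$ over $\mathbb{GF}(2)$.

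These are direct checks. $D^2=I_2$ is immediate. $T^2=\begin{bmatrix}1&0\\2&1\end{bmatrix}=I_2$ because $2=0$ in $\mathbb{GF}(2)$; equivalently, applying $\tau_x$ twice gives $(L_x+R_x+R_x,R_x)=(L_x,R_x)$. For the last identity, $TD=\begin{bmatrix}0&1\\1&1\end{bmatrix}$, and one computes $(TD)^2=\begin{bmatrix}1&1\\1&0\end{bmatrix}$ and $(TD)^3=I_2$.

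As a sanity check without matrices, track the pair: $(L,R)\xrightarrow{\tau}(L+R,R)\xrightarrow{\delta}(R,L+R)$. A second round gives $(L+R,L)$, and a third gives $(L,R)$. The only point needing care is the composition order convention of Definition~\ref{defdeltatau}, where the rightmost operation acts first, together with the use of characteristic $2$. Neither is a real obstacle, so the argument is essentially bookkeeping.
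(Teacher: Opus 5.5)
Your proof is correct and takes the paper's route: reduce to a single index $x$, using that operations at distinct indices act on disjoint column pairs and commute, then verify $\delta_x^2=\tau_x^2=(\delta_x\tau_x)^3=id$ on the column pair $(L_x,R_x)$. Writing $\delta_x$ and $\tau_x$ as right multiplication by $2\times 2$ matrices over $\mathbb{GF}(2)$ is only a repackaging of the paper's direct tracking $(L,R)\mapsto(R,L+R)\mapsto(L+R,L)\mapsto(L,R)$, which your sanity check also carries out.
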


\begin{proof}
	Assume that the paired matrix is $(L,R)$. Since the operations performed on distinct elements of $A$ are mutually independent, it suffices to prove that	$\delta_x^2=\tau_x^2=(\delta_x\tau_x)^3=id$ for any $x\in A$.  By Definition \ref{defdeltatau}, one has 
	\[(\delta_x(L,R))_x=(R_x,L_x) ,\qquad(\tau_x(L,R))_x=(L_x+R_x,R_x)\]
	and other column pairs indexed by $y\in A\backslash\{x\}$ remain unchanged. It is sufficient to consider the action of $\delta, \tau $ and $\delta\tau$ on the column pair indexed by $x$. For convenience, we represent the entire matrix $\bullet_x(L,R)$ by the column pair indexed by $x$, where $\bullet$ is a operator word over the alphabet $\{\delta,\tau\}$. Thus we have that
\[\delta_x^2(L,R)=\delta_x(R_x,L_x)=(L_x,R_x),\] 
\[\tau_x^2(L,R)=\tau_x(L_x+R_x,R_x)=(L_x+R_x+R_x,R_x)=(L_x,R_x)\] 
over $\mathbb{GF}(2)$, which implies that $\delta_x^2=\tau_x^2=id$.
	
Computing the product $\delta\tau$, we can obtain that
	\[\begin{aligned}
		(\delta\tau)_x(L,R)=\delta_x(L_x+R_x,R_x)=(R_x,L_x+R_x),
	\end{aligned}\]
	\[\begin{aligned}
		(\delta\tau)_x^2(L,R)=(\delta\tau)_x(R_x,L_x+R_x)=(L_x+R_x,L_x).
	\end{aligned}\]
	Applying it a third time gives
	$\begin{aligned}
		(\delta\tau)_x^3(L,R)=(\delta\tau)_x(L_x+R_x,L_x)=(L_x,R_x),
	\end{aligned}$ 
	it follows that $(\delta\tau)_x^3=id$.
	
	Therefore $\delta_A^2=\tau_A^2=(\delta_A\tau_A)^3=id.$
\end{proof}

\begin{defi}\label{def:twualitypoly(M,I)}
	Let~$M$ be an~$n\times n$ matrix over $\mathbb{GF}(2)$,~with rows and columns indexed by a set $V$.~For $\bullet \in \{\delta, \tau, \delta\tau, \tau\delta, \delta\tau\delta\}$,~the \emph{partial-$\langle \bullet \rangle$ polynomial} of the paired matrix $(M,I_n)$ is defined as
	\[P_{\langle \bullet \rangle}((M,I_n),z) = \sum_{A \subseteq V} z^{\text{rank}(L(\bullet_A(M,I_n)))+\text{rank}(R(\bullet_A(M,I_n)))-n}.\]
	
\end{defi}

\begin{thm}\label{Th:polyoftwuality}
	Let $M$ be an $n\times n$ matrix over $\mathbb{GF}(2)$ with rows and columns indexed by a finite set $V$. For $\bullet \in \{\delta, \tau, \delta\tau, \tau\delta, \delta\tau\delta\}$, we have
	\[P_{\langle \bullet \rangle}((M,I_n),z)=P_{\langle \bullet \rangle}(M,z).\]
\end{thm}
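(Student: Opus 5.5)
The plan is to prove the stronger termwise statement: for every $A\subseteq V$ and every $\bullet$,
\[
\operatorname{rank}\bigl(L(\bullet_A(M,I_n))\bigr)+\operatorname{rank}\bigl(R(\bullet_A(M,I_n))\bigr)-n=r_{\langle\bullet\rangle}(M,A),
\]
where $r_{\langle\bullet\rangle}$ is as in Definition~\ref{def:matrix_partial_twuality}. Summing over $A$ then gives the theorem. Since the operations are columnwise (Definition~\ref{defdeltatau}), I first compute the column pair of $\bullet_A(M,I_n)$ at each $x\in A$. Columns at $y\notin A$ stay $(M_y,e_y)$, where $e_x$ denotes the $x$-th column of $I_n$. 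Composing right-to-left and computing over $\mathbb{GF}(2)$ as in the proof of Theorem~\ref{Condition1}, the column pair at $x\in A$ is:
\begin{itemize}
\item $\delta$: $(e_x,M_x)$;
\item $\tau$: $(M_x+e_x,e_x)$;
\item $\delta\tau$: $(e_x,M_x+e_x)$;
\item $\tau\delta$: $(M_x+e_x,M_x)$;
\item $\delta\tau\delta$: $(M_x,M_x+e_x)$.
\end{itemize}

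The key auxiliary fact is this. Let $S\subseteq V$ and let $N$ be an $n\times n$ matrix whose column at $y\notin S$ is $e_y$ and whose columns at $x\in S$ are arbitrary. Then
\[
\operatorname{rank}(N)=|S^c|+\operatorname{rank}(N[S]).
\]
To prove it, permute rows and columns so that $N$ is block triangular, with $I_{|S^c|}$ on the $S^c$ block and zeros above it in the $S$ rows. Then clear the $S^c$ rows of the $S$ columns using the identity columns; equivalently, apply Lemma~\ref{calculaterank} with the nonsingular block $I_{|S^c|}$. In particular, if the $S$ columns agree with those of a matrix $K$, then $\operatorname{rank}(N)=|S^c|+\operatorname{rank}(K[S])=n-\operatorname{corank}(K[S])$.

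Now I apply the fact case by case.
\begin{itemize}
\item For $\delta$: $L$ has $e_x$ on $A$ and $M_y$ on $A^c$, so its rank is $|A|+\operatorname{rank}(M[A^c])$. $R$ has $M_x$ on $A$ and $e_y$ on $A^c$, so its rank is $|A^c|+\operatorname{rank}(M[A])$. Subtracting $n$ gives $r_{\langle\delta\rangle}$.
\item For $\tau$: $L=M+I_A$ and $R=I_n$, giving $\operatorname{rank}(M+I_A)$.
\item For $\delta\tau$: $L$ is as in the $\delta$ case, and $R$ has columns of $M+I_A$ on $A$, giving $|A^c|+\operatorname{rank}((M+I_A)[A])$.
\item For $\tau\delta$: $L=M+I_A$, and $R$ is as in the $\delta$ case with rank $n-\operatorname{corank}(M[A])$.
\item For $\delta\tau\delta$: $L=M$, and $R$ has rank $n-\operatorname{corank}((M+I_A)[A])$.
\end{itemize}
In each case the $n$'s cancel to give exactly the corresponding $r_{\langle\bullet\rangle}(M,A)$.

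I expect the main obstacle to be bookkeeping rather than depth. The delicate points are getting the composition order of $\tau\delta$ versus $\delta\tau$ right, and checking that the rank-fact submatrix is the principal submatrix on the right index set ($A$ or $A^c$). The principal submatrix on $A$ arises because the $A$ rows of the identity columns at $A^c$ vanish. I would verify the block-triangular reduction once carefully and then reuse it uniformly across all five cases.
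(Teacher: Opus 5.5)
Your proposal is correct and takes essentially the paper's approach. Both arguments compute $\bullet_A(M,I_n)$ explicitly and read off the ranks of the two blocks from the block-triangular form, which has an identity block on the indices left untouched. Your auxiliary fact $\operatorname{rank}(N)=|S^c|+\operatorname{rank}(N[S])$ is simply a uniform packaging of the block-matrix rank computations that the paper carries out case by case.
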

\begin{proof}
	For any $A\subseteq V$, denote 
	\begin{equation}\label{eq0}
		\text{rank}(L(\bullet_A(M,I_n)))+\text{rank}(R(\bullet_A(M,I_n)))-n=r_{\langle \bullet \rangle}((M,I_n),A).
	\end{equation}
	By Definition \ref{def:twualitypoly(M,I)}, it suffices to prove that 
	$r_{\langle \bullet \rangle}((M,I_n),A)=r_{\langle \bullet \rangle}(M,A)$. Assume that $|A|=a$.
	
	\textbf{Case 1. $\bullet=\delta$} 
	
	For $\delta$, by Definition \ref{defdeltatau}, one has
	\[	\begin{aligned}
	\delta_A(M,I_n) =\delta_A \left(\!\!
	\begin{array}{c@{}c}		
		\begin{array}{c}
		\end{array} &
		\left[\!\!
		\begin{array}{cc|cc}
			M[A] \!\!&\!\! M[A,A^c] \!\!&\!\! I_{a} \!\!&\!\! \mathbf{0}_{a,n-a}\\
			M[A^c,A] \!\!&\!\! M[A^c] \!\!&\!\! \mathbf{0}_{n-a,a} \!\!&\!\! I_{n-a}\\
		\end{array}
		\!\!\right]
	\end{array}\!\!\right)=\left[\!\!
		\begin{array}{cc|cc}
			I_{a} \!\!&\!\! M[A,A^c] \!\!&\!\! M[A] \!\!&\!\! \mathbf{0}_{a,n-a}\\
			\mathbf{0}_{n-a,a} \!\!&\!\! M[A^c] \!\!&\!\! M[A^c,A] \!\!&\!\! I_{n-a}\\
		\end{array}
		\!\!\right].
	\end{aligned}\]
	By Equation (\ref{eq0}) and Definition \ref{def:matrix_partial_twuality}, we have that \[r_{\langle \delta \rangle}\!((M \!,I_n),A) =  \text{rank}(M[A])  + \text{rank}(M[A^c]) =
	r_{\langle \delta \rangle}\!(M \!,A).\]
	
	\textbf{Case 2. $\bullet=\tau$ } 
	
	For $\tau$, by Definition \ref{defdeltatau}, we have that
	\[	\begin{aligned}
		\!\tau_A(M,I_n) \!=\! \tau_A\! \left(\!\!\!\!
		\begin{array}{c@{}c}
			\begin{array}{c}
			\end{array} &
			\left[\!\!\!
			\begin{array}{cc|cc}
				\!M[A] \!\!&\!\!\! M[A,A^c] \!\!\!&\!\! I_{a} \!\!\!&\!\!\! \mathbf{0}_{a,n-a}\\
				\!M[A^c\!\!,A] \!\!\!&\!\!\!\! M[A^c] \!\!\!&\!\! \mathbf{0}_{n-a,a} \!\!\!&\!\!\! I_{n-a}\\
			\end{array}
			\!\!\!\right]
		\end{array}\!\! \!\!\right)
		\!\!=\!\!\! \!\begin{array}{c@{}c}
			\left[\!\!\!
			\begin{array}{cc|cc}
				\!M[A]\!+\! I_{a} \!\!&\!\!\! M[A,A^c] \!\!&\!\! I_{a} \!\!\!&\!\!\! \mathbf{0}_{a,n-a}\\
				\! M[A^c\!,A] \!\!&\!\!\! M[A^c] \!\!&\!\! \mathbf{0}_{n-a,a} \!\!\!&\!\!\! I_{n-a}\\
			\end{array}
			\!\!\!\right]
		\end{array}
		\!\!=\!\!(M\!+\!I_A,I_n),\\
	\end{aligned}
	\]
	where $I_A=\left[\!\!\!
	\begin{array}{cc}
		  I_{a} &  \mathbf{0}_{a,n-a}\\
		 \mathbf{0}_{n-a,a} & \mathbf{0}_{n-a}\\
	\end{array}
	\!\!\!\right]$.
	By Equation (\ref{eq0}) and Definition (\ref{def:matrix_partial_twuality}), one has
	$$r_{\langle \tau \rangle}((M,I_n),A)=\text{rank}(M+I_A)=
	r_{\langle \tau \rangle}(M,A).$$
	
	\textbf{Case 3. $\bullet=\delta\tau$ } 
	
	For $\delta\tau$, one has that
	\[	\begin{aligned}
		(\delta\tau)_A(M,I_n) &=\delta_A \left(\!\!\!
		\begin{array}{c@{}c}
			\left[\!
			\begin{array}{cc|cc}
				M[A]\!+\! I_a \!\!&\!\! M[A,A^c] \!\!&\!\! I_{a} \!\!\!&\!\!\! \mathbf{0}_{a,n-a}\\
				M[A^c,A] \!\!&\!\! M[A^c] \!\!&\!\! \mathbf{0}_{n-a,a} \!\!\!&\!\!\! I_{n-a}\\
			\end{array}
			\!\right]
		\end{array}\!\right)\!\!=\!\!\begin{array}{c@{}c}
			\left[\!
			\begin{array}{cc|cc}
				I_{a} \!\!\!&\!\!\! M[A,A^c] \!\!&\!\! M[A]\!+\! I_a \!\!&\!\! \mathbf{0}_{a,n-a}\\
				\mathbf{0}_{n-a,a} \!\!\!&\!\!\! M[A^c] \!\!&\!\! M[A^c,A] \!\!&\!\! I_{n-a}\\
			\end{array}
			\!\right]
		\end{array}\\
	\end{aligned}
	\]
	by Definition \ref{defdeltatau}. Combined with Equation (\ref{eq0}) and Definition \ref{def:matrix_partial_twuality}, one has that 
	\[r_{\langle \delta\tau \rangle}((M,I_n),A)=\text{rank}((M+I_n)[A])+\text{rank}(M[A^c])=
	r_{\langle \delta\tau \rangle}(M,A).\]
	
	\textbf{Case 4. $\bullet=\tau\delta$ } 
	
	For $\tau\delta$, one has that
	\[	\begin{aligned}
		(\tau\delta)_A(M,I_n) \!=\!\tau_A \left(\!\!\!
		\begin{array}{c@{}c}
			\left[\!\!
			\begin{array}{cc|cc}
				I_{a} \!\!\!&\!\!\! M[A,A^c] \!\!&\!\! M[A] \!\!\!&\!\!\! \mathbf{0}_{a,n-a}\\
				\mathbf{0}_{n-a,a} \!\!\!&\!\!\! M[A^c] \!\!&\!\! M[A^c,A] \!\!\!&\!\!\! I_{n-a}\\
			\end{array}
			\!\!\right]
		\end{array}\!\right)\!\!=\!\!\! \begin{array}{c@{}c}
			\left[\!\!
			\begin{array}{cc|cc}
				M[A]+I_a \!\!&\!\! M[A,A^c] \!\!&\!\! M[A] \!\!&\!\! \mathbf{0}_{a,n-a}\\
				M[A^c,A] \!\!&\!\! M[A^c] \!\!&\!\! M[A^c,A] \!\!&\!\! I_{n-a}\\
			\end{array}
			\!\!\right]
		\end{array}\\
	\end{aligned}
	\]
	by Definition \ref{defdeltatau}. Combined with Equation (\ref{eq0}) and Definition \ref{def:matrix_partial_twuality}, we have that 
	$$r_{\langle \tau\delta \rangle}((M,I_n),A)=\text{rank}(M+I_{A})+\text{rank}(M[A])-|A|=\text{rank}(M + I_A) - \text{corank}(M[A])=r_{\langle \tau\delta \rangle}(M,A).$$
	
	\textbf{Case 5. $\bullet=\delta\tau\delta$ } 

For $\delta\tau\delta$, we have that
\[	\begin{aligned}
	(\delta\tau\delta)_A(M,I_n)\!=\!\delta_A \left(\!\!\!
	\begin{array}{c@{}c}
		\left[\!\!
		\begin{array}{cc|cc}
			M[A]\!+\!I_a \!\!\!&\!\! M[A,\! A^c] \!\!&\!\! M[A] \!\!&\!\! \mathbf{0}_{a,n-a}\\
			M[A^c,\! A] \!\!\!&\!\! M[A^c] \!\!&\!\! M[A^c \! ,\! A] \!\!&\!\! I_{n-a}\\
		\end{array}
		\!\!\right]
	\end{array}\!\right)\!\!=\!\!\!\begin{array}{c@{}c}
		\left[\!\!\!
		\begin{array}{cc|cc}
			M[A] \!\!\!\!&\!\! M[A, \! A^c] \!\!&\!\! M[A]\!+\! I_a \!\!&\!\! \mathbf{0}_{a,n-a}\\
			M[A^c \! ,\! A] \!\!\!\!&\!\! M[A^c] \!\!&\!\! M[A^c \! ,\! A] \!\!&\!\! I_{n-a}\\
		\end{array}
		\!\!\! \right]
	\end{array}
\end{aligned}
\]
by Definition \ref{defdeltatau}. Since $\begin{array}{c@{}c}
	\left[\!\!\!
	\begin{array}{cc}
		M[A] \!\!&\!\!\! M[A, \! A^c] \\
		M[A^c \! ,\! A] \!\!&\!\!\! M[A^c]\\
	\end{array}
	\!\!\! \right]=M
\end{array}$ and $\text{rank}\left(\!\!\!\begin{array}{c@{}c}
\left[\!\!\!
\begin{array}{cc}
	M[A]+I_a \!\!&\!\! \mathbf{0}_{a,n-a}\\
	M[A^c,A] \!\!&\!\! I_{n-a}\\
\end{array}
\!\!\! \right]
\end{array}\!\right)=\text{rank}(M[A]+I_a)+n-a=\text{rank}((M+I_n)[A])+n-a=n-\text{corank}((M+I_n)[A])$, one has that $$r_{\langle \delta\tau\delta \rangle}((M,I_n),A)=\text{rank}(M) - \text{corank}((M+I_n)[A])=r_{\langle \delta\tau\delta \rangle}(M,A)$$ by Equation (\ref{eq0}) and Definition \ref{def:matrix_partial_twuality}.
\end{proof}
\begin{Remark}
	Theorem \emph{\ref{Th:polyoftwuality}} proves the rank-statistic requirement in Problem \emph{\ref{Problem1}}.
\end{Remark}

By Case $5$ of the proof of Theorem \ref{Th:polyoftwuality}, the following Corollary \ref{Coro1} is an immediate consequence.
\begin{coro}\label{Coro1}
	Let $M$ be an $n\times n$ matrix over $\mathbb{GF}(2)$. For every \(A\subseteq V\),
	\[	\begin{aligned}
		(\delta\tau\delta)_A(M,I_n) =(M,I_n+MI_A)
	\end{aligned}
\quad\text{and}\quad I_A=\begin{array}{c@{}c}
		\left[\!\!\!
		\begin{array}{cc}
			I_{|A|} & \mathbf{0}_{|A|,n-|A|} \\
			\mathbf{0}_{n-|A|,|A|}
			 & \mathbf{0}_{n-|A|,n-|A|}\\
		\end{array}
		\!\!\! \right].
	\end{array}\]
\end{coro}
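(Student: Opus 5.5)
The plan is to verify the identity column pair by column pair, using Definition~\ref{defdeltatau} directly rather than the block computations from the proof of Theorem~\ref{Th:polyoftwuality}. This works for an arbitrary ordering of $V$. In the block-form ordering of the statement, where the elements of $A$ come first, it reduces to exactly the display in Case~5.

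\textbf{The operations act column by column.} By Definition~\ref{defdeltatau} and the Note following it, $(\delta\tau\delta)_A=\prod_{x\in A}\delta_x\circ\tau_x\circ\delta_x$, and each factor changes only the column pair indexed by its own $x$. So it suffices to compute the image of every column pair $(M_x,e_x)$ of $(M,I_n)$, where $e_x$ is the $x$-th standard unit column, i.e.\ the $x$-th column of $I_n$.

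\textbf{Columns with $x\notin A$.} Such a pair is untouched, so it remains $(M_x,e_x)$.

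\textbf{Columns with $x\in A$.} Applying the rightmost operation first, we get
\[
(M_x,e_x)\xrightarrow{\ \delta_x\ }(e_x,M_x)\xrightarrow{\ \tau_x\ }(e_x+M_x,\,M_x)\xrightarrow{\ \delta_x\ }(M_x,\,e_x+M_x).
\]
This is the same computation as $(\delta\tau)_x$ in the proof of Theorem~\ref{Condition1}, preceded by one $\delta_x$.

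\textbf{Reassembling the blocks.} The left block of $(\delta\tau\delta)_A(M,I_n)$ has $x$-th column $M_x$ for every $x$, so it equals $M$. The right block has $x$-th column $e_x+M_x$ if $x\in A$ and $e_x$ otherwise. Right multiplication by the diagonal matrix $I_A$ keeps the columns of $M$ indexed by $A$ and replaces the others by zero. Hence $MI_A$ has $x$-th column $M_x$ for $x\in A$ and $\mathbf 0$ otherwise. It follows that the right block is exactly $I_n+MI_A$, which gives the claim.

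As a consistency check, ordering $A$ first gives
\[
I_n+MI_A=\left[\begin{array}{cc} M[A]+I_a & \mathbf 0\\ M[A^c,A] & I_{n-a}\end{array}\right],
\]
which agrees with Case~5. There is no real obstacle here. The only points needing care are the composition convention (rightmost acts first, which does not matter since $\delta\tau\delta$ is a palindrome) and reading $I_A$ as the diagonal indicator of $A$ in a general ordering.
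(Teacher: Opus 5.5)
Your proof is correct and is essentially the paper's argument. The paper obtains the corollary by reading off the block display in Case 5 of the proof of Theorem \ref{Th:polyoftwuality}, with $A$ ordered first, and your computation $(M_x,e_x)\mapsto(e_x,M_x)\mapsto(e_x+M_x,M_x)\mapsto(M_x,e_x+M_x)$ is that same calculation done one column at a time; as a small bonus, it does not require $A$ to be listed first.
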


For $\bullet \in \{\delta, \tau, \delta\tau, \tau\delta, \delta\tau\delta\}$, the partial-\(\langle\bullet\rangle\) polynomial of a square matrix generalizes the corresponding polynomial of a bouquet. By Theorem~\ref{Th:polyoftwuality}, the paired-matrix polynomial is equivalent to the square-matrix polynomial and is therefore also a generalization of the bouquet polynomial.

\begin{coro}
	Let \(B\) be a bouquet, \(S=SI(B)\) be its signed intersection graph, and \((G,L)\) be the corresponding graft. Set \(M=\operatorname{adj}(G,L)\). Then, for every \(\bullet\in\{\delta,\tau,\delta\tau,\tau\delta,\delta\tau\delta\}\),
\[{}^{\partial}\varepsilon_B^\bullet(z)=P_{\langle\bullet\rangle}((M,I_{|E(B)|}),z).\]
	
\end{coro}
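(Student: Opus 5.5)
The plan is to chain three identifications: the Euler-genus polynomial of $B$ with the Yan--Jin/Deng--Jin--Yan matrix polynomial, and that with the paired-matrix polynomial via Theorem~\ref{Th:polyoftwuality}. First I fix the identification of loops with indices. The vertex set of $S=SI(B)$ is $E(B)$, so $M=\operatorname{adj}(G,L)$ is an $|E(B)|\times|E(B)|$ matrix indexed by $V=E(B)$. Here $G=I(B)$ and $L$ is the set of twisted loops. A subset $A\subseteq E(B)$ is then literally a subset of the index set $V$, and the sums defining ${}^{\partial}\varepsilon_B^\bullet(z)$ and $P_{\langle\bullet\rangle}(M,z)$ range over the same family of subsets.

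Second, I invoke the result of Deng, Jin and Yan~\cite{Matrice_Twualitypolynomial} quoted in the introduction. For a bouquet $B$ and $A\subseteq E(B)$, they show that $\varepsilon(B^{\bullet\mid A})$ equals the exponent $r_{\langle\bullet\rangle}(M,A)$ of Definition~\ref{def:matrix_partial_twuality}, where $M$ is the adjacency matrix of the signed intersection graph. This is precisely the sense in which their matrix polynomial generalizes the bouquet polynomial. It gives termwise equality of monomials, so ${}^{\partial}\varepsilon_B^\bullet(z)=P_{\langle\bullet\rangle}(M,z)$ for each of the five twualities.

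Third, I apply Theorem~\ref{Th:polyoftwuality} with $n=|E(B)|$ to get $P_{\langle\bullet\rangle}(M,z)=P_{\langle\bullet\rangle}((M,I_{|E(B)|}),z)$. Chaining the two equalities yields the statement.

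The only step needing care is the second: checking that the conventions agree. Specifically, the diagonal entry $1$ must mark negative, i.e.\ twisted, loops, matching $\operatorname{adj}(SI(B))$, and we work over $\mathbb{GF}(2)$. One possible snag is that Problem~\ref{Problem1} lists $\tau\delta\tau$ while Definition~\ref{def:matrix_partial_twuality} uses $\delta\tau\delta$. Since $\delta\tau\delta=\tau\delta\tau$ in $S_3$, this causes no discrepancy. If the cited bouquet result were stated only for orientable bouquets or with different sign conventions, I would instead verify the five genus formulas directly. For that I would use Euler's formula $\varepsilon=2-v+e-f$ for a bouquet, together with the rank characterization of faces via the delta-matroid $D(M)$ and Bouchet's twist identity $D(C\ast X)=D(C)\ast X$. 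I expect no genuine obstacle beyond this bookkeeping.
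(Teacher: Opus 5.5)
Your proposal is correct and matches the paper's argument. The paper likewise combines the Deng--Jin--Yan identity ${}^{\partial}\varepsilon_B^\bullet(z)=P_{\langle\bullet\rangle}(\operatorname{adj}(SI(B)),z)$ with Theorem~\ref{Th:polyoftwuality} at $n=|E(B)|$, so the fallback direct verification you sketch is not needed.
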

	

\section{Recurrence relations for the partial-$\langle\delta\tau\delta\rangle$~polynomial}

Recurrence relations for the partial-\(\langle\delta\rangle\) polynomial and the partial-\(\langle\tau\rangle\) polynomial of simple signed graphs, with respect to an edge, were obtained by Li~\cite{one-pointjoin} and by Feng, Guo, and Yan~\cite{Partial_Twuality}, respectively. In this section, we establish three recurrence relations for the partial-\(\langle\delta\tau\delta\rangle\) polynomial of a simple signed graph with respect to a specified edge. Each recurrence expresses the polynomial as a linear combination of polynomials of three or four simple signed graphs of smaller order.

Based on the different signs of two vertices associated with an edge in a signed graph, we have the following three theorems. To avoid redundancy, we introduce some notation that will be used throughout the proofs of the three recurrence relations. 

Let $M$ be an $n\times n$ symmetric matrix over $\mathbb{GF}(2)$, with rows and columns indexed by a finite set $V$. Fix distinct vertices \(u,v\in V\). Partition \(2^V\) into the four classes
\[
\begin{aligned}
\mathbb{V}_0&:=\{A\subseteq V:u\notin A,\ v\notin A\},&
\mathbb{V}_1&:=\{A\subseteq V:u\in A,\ v\notin A\},\\
\mathbb{V}_2&:=\{A\subseteq V:u\notin A,\ v\in A\},&
\mathbb{V}_3&:=\{A\subseteq V:u\in A,\ v\in A\}.
\end{aligned}
\]
By Corollary~\ref{Coro1},
\[
(\delta\tau\delta)_A(M,I_n)=(M,I_n+MI_A),
\]
and hence
\begin{equation}\label{eq1.9}
P_{\langle\delta\tau\delta\rangle}(M,z)
=z^{\operatorname{rank}(M)-n}
\sum_{A\subseteq V}z^{\operatorname{rank}(I_n+MI_A)}.
\end{equation}


\begin{thm}\label{recurrence1}
	Let $M$ be an $n\times n$ symmetric matrix over $\mathbb{GF}(2)$, with rows and columns indexed by a finite set $V$. Assume that $u,v\in V$ and $V^{\prime}=V\backslash\{u,v\}$. If 
	\[M=\left[
	\begin{array}{ccc}
		0 & 1 & P^T \\
		1 & 0 & Q^T \\
		P & Q & N \\
	\end{array}
	\right],\]
	where the rows and columns of the block matrix $M$ are indexed by $\{u\},\{v\}$ and $V^{\prime}$, then 
	\begin{equation*}
		\!\! P_{\langle \delta\tau\delta \rangle}(M,z) 
		\!=\!\! \left(\!z^{-\text{rank}(N)}\! P_{\langle \delta\tau\delta \rangle}(N,z)
		\! + \!
		z^{-\text{rank}(N+QQ^{T})}\! P_{\langle \delta\tau\delta \rangle}(N+QQ^{T}\!\!,z)
		\! + \! 
		z^{-\text{rank}({H})}\! P_{\langle \delta\tau\delta \rangle}({H},z) \!\right)\!\! z^{\!\text{rank}(M)}\!\!,
	\end{equation*}
where $H=\left[
\begin{array}{cc}
	1 & (P+Q)^{T} \\
	P+Q &  N+PP^{T}\\	
\end{array}
\right].$
\end{thm}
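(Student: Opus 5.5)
The plan is to start from Equation~(\ref{eq1.9}) and rewrite the exponent in terms of principal submatrices of $M+I_n$ alone. Then I would split the sum over $A$ according to the four classes $\mathbb V_0,\dots,\mathbb V_3$ and match each class with one of the three polynomials on the right-hand side.

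\textbf{Step 1: reformulate the polynomial.} The columns of $I_n+MI_A$ are $e_x$ for $x\notin A$ and $e_x+M_x$ for $x\in A$. Clearing the rows outside $A$ using the unit columns $e_x$, $x\notin A$, gives
\[
\operatorname{rank}(I_n+MI_A)=n-|A|+\operatorname{rank}((M+I_n)[A])=n-\operatorname{corank}((M+I_n)[A]).
\]
This is the same computation as in Case~5 of Theorem~\ref{Th:polyoftwuality}. Hence, for every square $K$ of order $m$,
\[
P_{\langle\delta\tau\delta\rangle}(K,z)=z^{\operatorname{rank}(K)}S(K),\qquad S(K):=\sum_{A}z^{-\operatorname{corank}((K+I_m)[A])}.
\]
The theorem is therefore equivalent to the identity $S(M)=S(N)+S(N+QQ^T)+S(H)$, and this identity is what I would prove.

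\textbf{Step 2: evaluate each class.} Write $B=A\cap V'$ and $P_B,Q_B$ for the restrictions of $P,Q$ to $B$.
\begin{itemize}
\item For $A\in\mathbb V_0$, $(M+I_n)[A]=(N+I)[B]$, so these terms give exactly $S(N)$.
\item For $A\in\mathbb V_1$, the submatrix is $\left[\begin{smallmatrix}1&P_B^T\\P_B&(N+I)[B]\end{smallmatrix}\right]$. By Lemma~\ref{calculaterank}, pivoting on the entry $1$ leaves the Schur complement $(N+PP^T+I)[B]$, which has the same corank as the whole submatrix.
\item For $A\in\mathbb V_2$, the same argument gives $(N+QQ^T+I)[B]$, so this class contributes exactly $S(N+QQ^T)$.
\item For $A\in\mathbb V_3$, the submatrix is $\left[\begin{smallmatrix}1&1&P_B^T\\1&1&Q_B^T\\P_B&Q_B&(N+I)[B]\end{smallmatrix}\right]$. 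Adding the $u$-row to the $v$-row and the $u$-column to the $v$-column is a congruence over $\mathbb{GF}(2)$, so it preserves rank. It produces
\[
\left[\begin{smallmatrix}1&0&P_B^T\\0&0&(P+Q)_B^T\\P_B&(P+Q)_B&(N+I)[B]\end{smallmatrix}\right].
\]
Pivoting on the $(u,u)$ entry with Lemma~\ref{calculaterank} leaves $\left[\begin{smallmatrix}0&(P+Q)_B^T\\(P+Q)_B&(N+PP^T+I)[B]\end{smallmatrix}\right]$.
\end{itemize}

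\textbf{Step 3: assemble.} Index $H$ by $\{w\}\cup V'$, where $w$ is the first row. Since $H$ has top-left entry $1$, $H+I$ has top-left entry $0$. The last matrix in Step~2 is therefore precisely $(H+I)[\{w\}\cup B]$. Also $(H+I)[B]=(N+PP^T+I)[B]$, which is the matrix obtained from $\mathbb V_1$. So $\mathbb V_1$ accounts for the subsets of the index set of $H$ avoiding $w$, and $\mathbb V_3$ accounts for those containing $w$. Together they give $S(H)$, which proves $S(M)=S(N)+S(N+QQ^T)+S(H)$. Multiplying by $z^{\operatorname{rank}(M)}$ and substituting $S(K)=z^{-\operatorname{rank}(K)}P_{\langle\delta\tau\delta\rangle}(K,z)$ for each of $N$, $N+QQ^T$ and $H$ gives the stated formula.

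The main obstacle is the $\mathbb V_3$ case. There the pivot entry sits in a singular $2\times2$ block, so a congruence is needed first to expose a usable $1$. One must then recognise the resulting Schur complement as a principal submatrix of $H+I$, and see that it merges with the $\mathbb V_1$ contribution rather than forming a fourth term. Throughout I would track coranks rather than ranks, since the principal submatrix sizes drop by one at each pivot.
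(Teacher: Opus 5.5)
Your proof is correct and is essentially the paper's argument. Both use the same four-way split of $2^V$ and the same Schur-complement reductions to $N$, $N+PP^T$, $N+QQ^T$ and the $w$-containing part of $H+I$. The differences are bookkeeping only: you track $\operatorname{corank}((M+I_n)[A])$ instead of $\operatorname{rank}(I_n+MI_A)$, and you merge the $\mathbb V_1$ and $\mathbb V_3$ contributions directly into $S(H)$, whereas the paper writes the both-in-$A$ class as a full $H$-sum minus an $(N+PP^T)$-sum that cancels the $u$-only class.

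One correction to your closing remark: the congruence in the $\mathbb V_3$ case is unnecessary. The $(u,u)$ entry of $(M+I_n)[A]$ is already $1$, and pivoting on it directly, as the paper does, gives the same Schur complement $\left[\begin{smallmatrix}0&(P+Q)_B^T\\(P+Q)_B&(N+PP^T+I)[B]\end{smallmatrix}\right]$.
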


\begin{proof}
We consider $I_n+MI_A$ according to the different values of $i$ for which $A\in \mathbb{V}_i, i\in \{1,2,3,4\}$. 

\textbf{Case 1. $A\in \mathbb{V}_1$}

Since  $u,v\not\in A$, one has that
\[
I_n+MI_A=\left[\begin{array}{ccc}
	1 & 0 & P^{T}I_{A^{\prime}} \\
	0 & 1 & Q^{T}I_{A^{\prime}} \\
	\mathbf{0} & \mathbf{0} & I_{n-2}+NI_{A^{\prime}} \\
\end{array}
\right],\]
where $A^{\prime}=A$, $I_{A}=
\left[\begin{array}{ccc}
	0 & 0 &  \mathbf{0} \\
	0 & 0 &  \mathbf{0} \\
	\mathbf{0} & \mathbf{0} & I_{A^{\prime}} \\
\end{array}
\right]$ as $I_{A}$ (resp.~$I_{A^{\prime}}$) is indexed by $V$~($A\subseteq V$) (resp.~$V^{\prime}$~($A^{\prime}\subseteq V^{\prime}$)). It follows that $\text{rank}(I_n+MI_A)=2+\text{rank}(I_{n-2}+NI_{A^{\prime}})$ and by Equations (\ref{eq1.9}), we have that \[\sum_{A\in\mathbb{V}_1}z^{\text{rank}(I_n+MI_A)}=z^2\!\!\sum_{A^{\prime}\subseteq V^{\prime}}\!\!  z^{\text{rank}(I_{n-2}+NI_{A^{\prime}})}=z^{2-[\text{rank}(N)-(n-2)]}{P}_{\langle \delta\tau\delta \rangle}(N,z)=z^{n-\text{rank}(N)}{P}_{\langle \delta\tau\delta \rangle}(N,z).\]
Thus one has that
\begin{equation}\label{eq2}
 	z^{\text{rank}(M)-n} \sum_{A\in\mathbb{V}_1}  z^{\text{rank}(I_n+MI_A)}
 	=z^{\text{rank}(M)-\text{rank}(N)}{P}_{\langle \delta\tau\delta \rangle}(N,z).
 \end{equation}

\textbf{Case 2. $A\in \mathbb{V}_2\cup\mathbb{V}_3$}

If $A\in \mathbb{V}_2$, we have that $u\in A,v\not\in A$ and
\[I_n+MI_A=\left[\begin{array}{ccc}
	1 & 0 & P^{T}I_{A^{\prime}} \\
	1 & 1 & Q^{T}I_{A^{\prime}} \\
	P & \mathbf{0} & I_{n-2}+NI_{A^{\prime}} \\
\end{array}
\right],\]
where ${A^{\prime}}=A\backslash\{u\}$, $I_{A}=
\left[\begin{array}{ccc}
	1 & 0 &  \mathbf{0} \\
	0 & 0 &  \mathbf{0} \\
	\mathbf{0} & \mathbf{0} & I_{A^{\prime}} \\
\end{array}
\right]$ as $I_{A}$ (resp.~$I_{A^{\prime}}$) is indexed by $V$~($A\subseteq V$) (resp.~$V^{\prime}$~($A^{\prime}\subseteq V^{\prime}$)). We partition $I_n+MI_A$ as a new block matrix. Let 
$I_n+MI_A=\left[\begin{array}{cc}
	W\!\! &\!\! X\\
	Y \!\!&\!\! I_{n-2}+NI_{A^{\prime}} \\
\end{array}
\right],$ 
where $W=\left[\begin{array}{cc}
	1 & 0\\
	1 & 1 \\
\end{array}
\right],$
$X=\left[\begin{array}{c}
	P^{T}I_{A^{\prime}} \\
	Q^{T}I_{A^{\prime}} \\
\end{array}
\right],$ 
$Y=\begin{bmatrix} P  & \mathbf{0} \end{bmatrix}$. By Lemma \ref{calculaterank}, we can obtain that 
\[\begin{aligned}
	\text{rank}(I_n+MI_A)&=\text{rank}(W)+\text{rank}\left((I_{n-2}+NI_{A^{\prime}})-YW^{-1}X\right)\\
	&=2+\text{rank}\left((I_{n-2}+NI_{A^{\prime}})-YW^{-1}X\right)=2+\text{rank}\left((I_{n-2}+NI_{A^{\prime}})-PP^{T}I_{A^{\prime}}\right)\\
	&=2+\text{rank}\left(I_{n-2}+(N+PP^{T})I_{A^{\prime}}\right),
\end{aligned}\]
and by Equations (\ref{eq1.9}), one has
\begin{equation*}
	\sum_{A\in\mathbb{V}_2}z^{\text{rank}(I_n+MI_A)}=z^2 \sum_{A^{\prime}\subseteq V^{\prime}} z^{\text{rank}(I_{n-2}+(N+PP^{T})I_{A^{\prime}})}=z^{n-\text{rank}(N+PP^{T})}{P}_{\langle \delta\tau\delta \rangle}(N+PP^{T},z).
\end{equation*}
It follows that
\begin{equation}\label{eq3}
	z^{\text{rank}(M)-n}\sum_{A\in\mathbb{V}_2} z^{\text{rank}(I_n+MI_A)}
	=z^{\text{rank}(M)-\text{rank}(N+PP^{T})}{P}_{\langle \delta\tau\delta \rangle}(N+PP^{T},z).
\end{equation}
	
By symmetry of $u$ and $v$, we have 
\begin{equation}\label{eq4}
	z^{\text{rank}(M)-n} \sum_{A\in\mathbb{V}_3}  z^{\text{rank}(I_n+MI_A)}
	=z^{\text{rank}(M)-\text{rank}(N+QQ^{T})}{P}_{\langle \delta\tau\delta \rangle}(N+QQ^{T},z).
\end{equation}

\textbf{Case 3. $A\in \mathbb{V}_4$}

Since $u,v\in A$, we have that
\[I_n+MI_A=\left[\begin{array}{ccc}
	1 & 1 & P^{T}I_{A^{\prime}} \\
	1 & 1 & Q^{T}I_{A^{\prime}} \\
	P & Q & I_{n-2}+NI_{A^{\prime}} \\
\end{array}
\right],\]
where~${A^{\prime}}\!=\! A\backslash\{u,v\}$,~$I_{A}\!=\!
\left[\!\!\begin{array}{ccc}
	1 & 0 &\!  \mathbf{0} \\
	0 & 1 &\!  \mathbf{0} \\
	\mathbf{0} & \mathbf{0} &\! I_{A^{\prime}} \\
\end{array}\!\!\right]$~as~$I_{A}$~and~$I_{A^{\prime}}$~are~indexed~by~$V$~and~$V^{\prime}$,~respectively.~We~partition~$I_n+MI_A$~as a new block matrix.~Let $I_n+MI_A\! =\! \left[\!\!\begin{array}{cc}
	W \!\!&\!\! X\\
	Y \!\!&\!\! Z \\
\end{array}
\!\!\right],$ 
where 
$Z\! =\!\left[\!\!\begin{array}{cc}
	1 \!&\!\! Q^{T}I_{A^{\prime}} \\
	Q \!&\!\! I_{n-2}\!+\! NI_{A^{\prime}}
\end{array}
\!\!\right],$ 
$X=\left[\begin{array}{cc}
	1 & P^{T}I_{A^{\prime}} \\
\end{array}
\right],$ 
$Y=\left[\begin{array}{cc}
	1 & P \\
\end{array}
\right]^{T}$ and 
$W=\left[\begin{array}{c}
	1  \\
\end{array}
\right].$
By Lemma \ref{calculaterank}, we can obtain that 
\[\begin{aligned}
	\text{rank}(I_n+MI_A)&=\text{rank}(W)+\text{rank}(Z-YW^{-1}X)=\text{rank}(W)+\text{rank}(Z-YX)\\
	&=1+\text{rank}\left(\left[\begin{array}{cc}
		1 & Q^{T}I_{A^{\prime}} \\
		Q & I_{n-2}+NI_{A^{\prime}}
	\end{array}
	\right]-\left[\begin{array}{c}
		1 \\ P \\
	\end{array}
	\right]\left[\begin{array}{cc}
		1 & P^{T}I_{A^{\prime}}\\
	\end{array}
	\right]
	 \right)\\
	&=1+\text{rank}\left(\left[\begin{array}{cc}
		0 & (Q^{T}+P^{T})I_{A^{\prime}} \\
		P+Q & I_{n-2}+(N+PP^{T})I_{A^{\prime}}
	\end{array}
	\right]
	\right).
\end{aligned}
\]
Let a new index $r\notin V$ and $V^{\prime}$ be the index of the block
${H}=\left[
\begin{array}{cc}
	1 & (P+Q)^{T} \\
	P+Q &  N+PP^{T}\\	
\end{array}
\right]$.
One has \[\left[\!\!\begin{array}{cc}
	0 \!\! & \!\! (Q^{T}+P^{T})I_{A^{\prime}} \\
	P+Q \!\! & \!\! I_{n-2}+(N+PP^{T})I_{A^{\prime}}
\end{array}
\!\!\right]=
I_{n-1}+
\left[\!\!\begin{array}{cc}
	1 \!\! & \!\! Q^{T}+P^{T} \\
	P+Q \!\! & \!\! N+PP^{T}
\end{array}
\!\!\right]
\left[\!\!\begin{array}{cc}
	1 \!\! & \!\! \mathbf{0} \\
	\mathbf{0} \!\! & \!\!  I_{A^{\prime}}
\end{array}
\!\!\right]
=I_{n-1}+{H}I_{\{r\}\cup{A^{\prime}} }.
\]
It follows that 
\begin{equation*}
	\!\!\sum_{A\in\mathbb{V}_4}z^{\text{rank}(I_n+MI_A)}=z \sum_{A^{\prime}\subseteq V^{\prime}}\! z^{\text{rank}(I_{n-1}+{H}I_{\{r\}\cup A^{\prime}})}
	= z \sum_{B\subseteq \{r\}\cup V^{\prime}} z^{\text{rank}(I_{n-1}+{H}I_{B})}- z \sum_{A^{\prime\prime}\subseteq V^{\prime}} z^{\text{rank}(I_{n-1}+{H}I_{A^{\prime\prime}})}.
\end{equation*}
For any ${A^{\prime\prime}}\subseteq V^{\prime}$, we have that 
$I_{n-1}+{H}I_{A^{\prime\prime}}=I_{n-1}+{H}\left[\begin{array}{cc}
	0 \!\! & \!\! \mathbf{0} \\
	\mathbf{0} \!\! & \!\! I_{A^{\prime\prime}}
\end{array}
\right]=
\left[\begin{array}{cc}
	1 \!\! & \!\! (Q^{T}+P^{T})I_{A^{\prime\prime}} \\
	\mathbf{0} \!\! & \!\! I_{n-2}+(N+PP^{T})I_{A^{\prime\prime}}
\end{array}
\right],$ which implies that 
$\text{rank}(I_{n-1}+{H}I_{A^{\prime\prime}})=1+\text{rank}(I_{n-2}+(N+PP^{T})I_{A^{\prime\prime}})$
and \[\sum_{{A^{\prime\prime}}\subseteq V^{\prime}}z^{\text{rank}(I_{n-1}+{H}I_{{A^{\prime\prime}}})}=z\sum_{{A^{\prime\prime}}\subseteq V^{\prime}}z^{\text{rank}(I_{n-2}+(N+PP^{T})I_{A^{\prime\prime}})}.\]
Thus we have that
\begin{align*}
	\sum_{A\in\mathbb{V}_4}z^{\text{rank}(I_n+MI_A)}&= z \sum_{B\subseteq \{r\}\cup V^{\prime}} z^{\text{rank}(I_{n-1}+{H}I_{B})} - z^{2} \sum_{A^{\prime\prime}\subseteq V^{\prime}} z^{\text{rank}(I_{n-2}+{(N+PP^{T})}I_{A^{\prime\prime}})}\\
	&=z^{n-\text{rank}({H})}{P}_{\langle \delta\tau\delta \rangle}({H},z)-z^{n-\text{rank}((N+PP^{T}))}{P}_{\langle \delta\tau\delta \rangle}((N+PP^{T}),z)
\end{align*}
by Equation (\ref{eq1.9}), and  
\begin{equation}\label{eq5}
	\!\! z^{\text{rank}(M)-n}\!\sum_{A\in\mathbb{V}_4}\! z^{\text{rank}(I_n+MI_A)} \! = \! z^{\text{rank}(M)} \!\left(\! z^{-\text{rank}({H})}\! {P}_{\langle \delta\tau\delta \rangle}({H},z) \! - \! z^{-\text{rank}(N \! + \! PP^{T})}\! {P}_{\langle \delta\tau\delta \rangle}(N \! + \! PP^{T}\!\! ,z)\right). 
\end{equation}

Combined with Equations (\ref{eq2}-\ref{eq5}), we have that
\[\begin{aligned}
	\!\!\! P_{\langle \delta\tau\delta \rangle}(M,z)&\!=\! z^{\text{rank}(M)-n}\sum_{A\subseteq V(S)}z^{\text{rank}(I_n+MI_A)}=z^{\text{rank}(M)-n}\sum_{i=1}^{4}\sum_{A\in\mathbb{V}_i}z^{\text{rank}(I_n+MI_A)}	\\
	&\!=\! z^{\text{rank}(M)}\!\!\left(\! z^{-\text{rank}(N)}\! P_{\langle \delta\tau\delta \rangle}\!(N,z) \! + \! z^{-\text{rank}(N+QQ^{T})}\! P_{\langle \delta\tau\delta \rangle}\!(N+QQ^{T}\!\!,z) \! + \! z^{-\text{rank}({H})}\! P_{\langle \delta\tau\delta \rangle}\!({H},z)\!\right)\!.
\end{aligned}\]
\end{proof}

The three graphs with the matrix on the right-hand side of Theorem \ref{recurrence1} as their adjacency matrix can be obtained from the graph on the left by the following graph operations.

\begin{Remark}\label{Re1}
	Assume that $S$ is a simple signed graph with $\text{adj}(S)=M$. Let $V=V(S)$ and $u,v\in V$. The matrices $N,N+QQ^T,H$ can be seen as the adjacent matrix of three graphs obtained by applying some graph operations on $S$ as follows: $N=\text{adj}(S[V\backslash\{u,v\}])$, $N+QQ^T=\text{adj}\left(S \ast v\right)[V\backslash \{u,v\}]$ and $H=\text{adj}\left(S \ast u\right)[V\backslash \{u\}]$ by Remark \emph{\ref{keylemma}}.
\end{Remark}

\begin{thm}\label{recurrence2}\label{Re2}
	Let $M$ be an $n\times n$ symmetric matrix over $\mathbb{GF}(2)$, with rows and columns indexed by a finite set $V$. Assume that $u,v\in V$ and $V^{\prime}=V\backslash\{u,v\}$. If 
	\[M=\left[
	\begin{array}{ccc}
		1 & 1 & P^T \\
		1 & 0 & Q^T \\
		P & Q & N \\
	\end{array}
	\right],\]
	where the rows and columns of the block matrix $M$ are indexed by $\{u\},\{v\}$ and $V^{\prime}$, then 
	\[\begin{aligned}
		P_{\langle \delta\tau\delta \rangle}\!(M,z) 
		\!=\!\left(\!z^{-\text{rank}(M_v)}\! P_{\langle \delta\tau\delta \rangle}\! (M_v,z) \!+ \! z^{-\text{rank}(N+QQ^{T})}\!P_{\langle \delta\tau\delta \rangle}\!(N\! +\! QQ^{T}\!,z) \! + \! z^{-\text{rank}(L)}\! P_{\langle \delta\tau\delta \rangle}\! (L,z)\!\right)\!\! z^{\!\text{rank}(M)}\!,
	\end{aligned}	\]
	where $M_v=M[V\backslash \{v\}]$ and $L=N+PP^{T}+QP^T+PQ^T$.
\end{thm}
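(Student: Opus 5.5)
We follow the scheme of the proof of Theorem~\ref{recurrence1}. Equation~(\ref{eq1.9}) gives
\[
P_{\langle\delta\tau\delta\rangle}(M,z)=z^{\operatorname{rank}(M)-n}\sum_{A\subseteq V}z^{\operatorname{rank}(I_n+MI_A)}.
\]
We split the sum according to how $A$ meets $\{u,v\}$, but group the classes differently from the four classes $\mathbb{V}_0,\dots,\mathbb{V}_3$. The three groups are: $v\notin A$ (with $u$ arbitrary); $v\in A,\ u\notin A$; and $u,v\in A$. In each group we reduce $\operatorname{rank}(I_n+MI_A)$ to a rank of the form $\operatorname{rank}(I_m+M'I_{A'})$ for a smaller symmetric matrix $M'$. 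We then recognise the resulting partial sum as $z^{n-\operatorname{rank}(M')}P_{\langle\delta\tau\delta\rangle}(M',z)$ via (\ref{eq1.9}) applied to $M'$. Throughout, write $A'=A\cap V'$.

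\textbf{Group 1: $v\notin A$.} Here no Schur complement is needed. Since $v\notin A$, the column of $MI_A$ indexed by $v$ is zero, so the $v$-column of $I_n+MI_A$ is the unit vector $e_v$. Deleting row $v$ and column $v$ therefore loses exactly one in rank. The remaining matrix is $I_{n-1}+M_vI_A$, where $A$ is now regarded as a subset of $V\setminus\{v\}$. Summing over all $A\subseteq V\setminus\{v\}$ gives
\[
z\sum_{A\subseteq V\setminus\{v\}}z^{\operatorname{rank}(I_{n-1}+M_vI_A)}=z^{\,n-\operatorname{rank}(M_v)}P_{\langle\delta\tau\delta\rangle}(M_v,z),
\]
which produces the $M_v$ term.

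\textbf{Group 2: $v\in A$, $u\notin A$.} Write $I_n+MI_A=\begin{bmatrix}W&X\\ Y&I_{n-2}+NI_{A'}\end{bmatrix}$ with blocks indexed by $\{u,v\}$ and $V'$. The blocks are
\[
W=\begin{bmatrix}1&1\\0&1\end{bmatrix},\qquad X=\begin{bmatrix}P^TI_{A'}\\ Q^TI_{A'}\end{bmatrix},\qquad Y=[\,\mathbf 0\ \ Q\,].
\]
Over $\mathbb{GF}(2)$ we have $W^{-1}=W$, so $YW^{-1}X=QQ^TI_{A'}$. Lemma~\ref{calculaterank} then gives
\[
\operatorname{rank}(I_n+MI_A)=2+\operatorname{rank}\bigl(I_{n-2}+(N+QQ^T)I_{A'}\bigr).
\]
Summing over $A'\subseteq V'$ yields $z^{n-\operatorname{rank}(N+QQ^T)}P_{\langle\delta\tau\delta\rangle}(N+QQ^T,z)$.

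\textbf{Group 3: $u,v\in A$.} Now
\[
W=\begin{bmatrix}0&1\\1&1\end{bmatrix},\qquad X \text{ as above},\qquad Y=[\,P\ \ Q\,].
\]
The diagonal $1$ at $u$ cancels against the $1$ from $I_n$, which is why $W$ changes. This $W$ is nonsingular with $W^{-1}=\begin{bmatrix}1&1\\1&0\end{bmatrix}$. Hence
\[
YW^{-1}X=[\,P+Q\ \ P\,]X=(PP^T+QP^T+PQ^T)I_{A'}.
\]
Lemma~\ref{calculaterank} then gives $\operatorname{rank}(I_n+MI_A)=2+\operatorname{rank}(I_{n-2}+LI_{A'})$. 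Note that $L$ is symmetric, because $QP^T+PQ^T$ is. Summing over $A'\subseteq V'$ yields $z^{n-\operatorname{rank}(L)}P_{\langle\delta\tau\delta\rangle}(L,z)$.

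\textbf{Conclusion.} Adding the three groups and multiplying by $z^{\operatorname{rank}(M)-n}$ gives the stated identity. Unlike Case~3 of Theorem~\ref{recurrence1}, all three blocks $W$ here are invertible, so no auxiliary matrix $H$ and no correction term are needed. The only delicate step is the Schur-complement computation in Groups~2 and~3: the entries of $W$ shift with the diagonal entry of $u$, and the products must be taken in the correct order. We would verify Group~3 carefully, since that is where the cross terms $QP^T+PQ^T$ arise.
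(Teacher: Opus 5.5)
Your proof is correct. Your Groups~2 and~3 are exactly the paper's Cases~3 and~4: the same blocks $W$, the same inverses over $\mathbb{GF}(2)$, and the same Schur complements $N+QQ^T$ and $L$.

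The difference is how you handle $v\notin A$. The paper keeps four classes.
\begin{itemize}
\item For $u,v\notin A$ it obtains $z^{\operatorname{rank}(M)-\operatorname{rank}(N)}P_{\langle\delta\tau\delta\rangle}(N,z)$.
\item For $u\in A$, $v\notin A$ it permutes rows and columns to expose the unit column at $v$ and renames $u$ as a fresh index $r$. It then writes the class sum as the full $M_v$-sum minus the sum over $B\not\ni r$, and evaluates the latter as $z$ times the $N$-sum.
\end{itemize}
The two $N$-terms then cancel in the final formula.

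You observe that the unit column $e_v$ appears for every $A$ with $v\notin A$, whether or not $u\in A$. Deleting row and column $v$ therefore gives the $M_v$ term in one step, with no auxiliary index and no add-and-subtract. Your version is shorter. It also shows that
\[
z^{\operatorname{rank}(M)-n}\sum_{A\subseteq V,\ v\notin A} z^{\operatorname{rank}(I_n+MI_A)} = z^{\operatorname{rank}(M)-\operatorname{rank}(M_v)}P_{\langle\delta\tau\delta\rangle}(M_v,z)
\]
holds for any square $M$ over $\mathbb{GF}(2)$ and any $v$, independently of the entries at $u$ and $v$.

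Applying this identity at both $u$ and $v$ and using inclusion--exclusion explains the $-N$ correction term in Theorem~\ref{recurrence3}. The $N$-term there is exactly the overlap class $u,v\notin A$. The paper's four-class split gains only a uniform template across the three recurrences.
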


\begin{proof}
	We consider $I_n+MI_A$ according to the different values of $i$ for which $A\in \mathbb{V}_i, i\in\{1,2,3,4\}$.
	
	\textbf{Case 1. $A\in \mathbb{V}_1$}
	
	Since  $u,v\not\in A$, one has 
	$I_n+MI_A=\left[\begin{array}{ccc}
		1 & 0 & P^{T}I_{A^{\prime}} \\
		0 & 1 & Q^{T}I_{A^{\prime}} \\
		\mathbf{0} & \mathbf{0} & I_{n-2}+NI_{A^{\prime}} \\
	\end{array}
	\right],$ 
	where $A^{\prime}=A$ and $I_{A^{\prime}}$ is indexed by $V^{\prime}$ ($A^{\prime}\subseteq V^{\prime}$). 
	It follows that $\text{rank}(I_n+MI_A)=2+\text{rank}(I_{n-2}+NI_{A^{\prime}}).$
	Thus we have that
	\[\sum_{A\in\mathbb{V}_1}z^{\text{rank}(I_n+MI_A)}= z^2\sum_{A^{\prime}\subseteq V^{\prime}} z^{\text{rank}(I_{n-2}+NI_{A^{\prime}})} =  z^{2-[\text{rank}(N)-(n-2)]}{P}_{\langle \delta\tau\delta \rangle}(N,z) = z^{n-\text{rank}(N)}{P}_{\langle \delta\tau\delta \rangle}(N,z)\]
	and 
	\begin{equation}\label{eq8}
		z^{\text{rank}(M)-n}\sum_{A\in\mathbb{V}_1} z^{\text{rank}(I_n+MI_A)} =z^{\text{rank}(M)-\text{rank}(N)}{P}_{\langle \delta\tau\delta \rangle}(N,z).
	\end{equation}

	\textbf{Case 2. $A\in \mathbb{V}_2$}
	
	If $A\in \mathbb{V}_2$, we have that $u\in A,v\not\in A$ and
	\[I_n+MI_A=\left[\begin{array}{ccc}
		0 & 0 & P^{T}I_{A^{\prime}} \\
		1 & 1 & Q^{T}I_{A^{\prime}} \\
		P & \mathbf{0} & I_{n-2}+NI_{A^{\prime}} \\
	\end{array}
	\right]
	\rightarrow 
	\left[\begin{array}{ccc}
		1 & 1 & Q^{T}I_{A^{\prime}} \\
		0 & 0 & P^{T}I_{A^{\prime}} \\
		P & \mathbf{0} & I_{n-2}+NI_{A^{\prime}} \\
	\end{array}
	\right]
	\rightarrow 
	\left[\begin{array}{ccc}
		1 & 1 & Q^{T}I_{A^{\prime}} \\
		0 & 0 & P^{T}I_{A^{\prime}} \\
		\mathbf{0} & P & I_{n-2}+NI_{A^{\prime}} \\
	\end{array}
	\right]\]
	through elementary operations that swap rows and columns, where ${A^{\prime}}=A\backslash\{u\}$ and $I_{A^{\prime}}$ is indexed by $V^{\prime}$  ($A^{\prime}\subseteq V^{\prime}$). 
	One has $\text{rank}(I_n+MI_A)=1+\text{rank}(\Gamma(A))$, where
	$	\begin{aligned}
		\Gamma(A)=
		\begin{array}{c@{}c}
			\left[\begin{array}{cc}
				0 & P^{T}I_{A^{\prime}} \\
				P & I_{n-2}+NI_{A^{\prime}} \\
			\end{array}
			\right].
		\end{array}
	\end{aligned}$
	Assume that the block of $\Gamma(A)$ is indexed by a new index $r\notin V$ and $V^{\prime}$. 
	Since \[
	\Gamma(A)=I_{n-1}+
	\left[\begin{array}{cc}
		1 & P^{T} \\
		P & N \\
	\end{array}
	\right]
	\left[\begin{array}{cc}
		1 & \mathbf{0} \\
		\mathbf{0} & I_{A^{\prime}} \\
	\end{array}
	\right]
	=I_{n-1}+
	M[V(S)\backslash \{v\}]I_{{\{r\}\cup A^{\prime}}}=I_{n-1}+
	M_vI_{{\{r\}\cup A^{\prime}}},\]
	it follows that
	\begin{align*}
		\sum_{A\in\mathbb{V}_2}z^{\text{rank}(I_n+MI_A)}&=z\sum_{{A^{\prime}}\subseteq V^{\prime}}z^{\text{rank}(I_{n-1}+M_vI_{{\{r\}\cup A^{\prime}}})}\\
		&=z\sum_{{B}\subseteq\{r\}\cup V^{\prime}}z^{\text{rank}(I_{n-1}+M_vI_{B})} - z\sum_{{A^{\prime\prime}}\subseteq V^{\prime}}z^{\text{rank}(I_{n-1}+M_vI_{A^{\prime\prime}})}.
	\end{align*}
	
	For any ${A^{\prime\prime}}\subseteq V^{\prime}$, since \[I_{n-1}+M_vI_{{A^{\prime\prime}}}=I_{n-1}+
	\left[\begin{array}{cc}
		1 & P^{T} \\
		P & N \\
	\end{array}
	\right]
	\left[\begin{array}{cc}
		0 & \mathbf{0} \\
		\mathbf{0} & I_{A^{\prime\prime}} \\
	\end{array}
	\right]
	=\left[\begin{array}{cc}
		1 & P^{T}I_{A^{\prime\prime}} \\
		\mathbf{0} & I_{n-2}+NI_{A^{\prime\prime}} \\
	\end{array}
	\right],\]
	we can obtain that $\text{rank}(I_{n-1}+M_vI_{{A^{\prime\prime}}})=1+\text{rank}(I_{n-2}+NI_{A^{\prime\prime}})$ and
	\begin{equation*}
		\sum_{A^{\prime\prime}\subseteq V^{\prime}} z^{\text{rank}(I_{n-1}+M_vI_{{A^{\prime\prime}}})}=z\sum_{{A^{\prime}}\subseteq V^{\prime}}z^{\text{rank}(I_{n-2}+NI_{A^{\prime}})}.
	\end{equation*}
	Thus one has that
	\begin{align*}
		\sum_{A\in\mathbb{V}_2}z^{\text{rank}(I_n+MI_A)}&=z\sum_{{B}\subseteq \{r\}\cup V^{\prime}}z^{\text{rank}(I_{n-1}+M_vI_{B})} - z^2\sum_{{A^{\prime\prime}}\subseteq V^{\prime}}z^{\text{rank}(I_{n-2}+NI_{A^{\prime\prime}})}\\
		&= z^{n-\text{rank}({M_v})}{P}_{\langle \delta\tau\delta \rangle}({M_v},z) -  z^{n-\text{rank}(N)}\! {P}_{\langle \delta\tau\delta \rangle}(N,z)
	\end{align*}
	 by Equation (\ref{eq1.9}), and one has that
	 \begin{equation}\label{eq10}
	  z^{\text{rank}(M)-n}\sum_{A\in\mathbb{V}_2} z^{\text{rank}(I_n+MI_A)} = \left( z^{-\text{rank}({M_v})} {P}_{\langle \delta\tau\delta \rangle}({M_v},z)  -  z^{-\text{rank}(N)} {P}_{\langle \delta\tau\delta \rangle}(N,z)\right) z^{\text{rank}(M)}. 
	 \end{equation}

	\textbf{Case 3. $A\in \mathbb{V}_3$}
	
	Since $A\in \mathbb{V}_3$, we have that $u\notin A,v\in A$ and
	$I_n+MI_A=\left[\begin{array}{ccc}
		1 & 1 & P^{T}I_{A^{\prime}} \\
		0 & 1 & Q^{T}I_{A^{\prime}} \\
		\mathbf{0} & Q & I_{n-2}+NI_{A^{\prime}} \\
	\end{array}
	\right],$
	where ${A^{\prime}}=A\backslash\{v\}$ and $I_{{A^{\prime}}}$ is indexed by $V^{\prime}$  ($A^{\prime}\subseteq V^{\prime}$). 
	We partition $I_n+MI_A$ as a new block matrix. Let 
	$I_n+MI_A=\left[\begin{array}{cc}
		W & X\\
		Y & I_{n-2}+NI_{A^{\prime}} \\
	\end{array}
	\right],$
	where $W=\left[\begin{array}{cc}
		1 & 1\\
		0 & 1 \\
	\end{array}
	\right],$ 
	$X=\left[\begin{array}{c}
		P^{T}I_{A^{\prime}} \\
		Q^{T}I_{A^{\prime}} \\
	\end{array}
	\right],$ 
	$Y=\begin{bmatrix} \mathbf{0}  & Q \end{bmatrix}$. By Lemma \ref{calculaterank}, we can obtain that 
	\[\begin{aligned}
		\text{rank}(I_n+MI_A)&=\text{rank}(W)+\text{rank}\left((I_{n-2}+NI_{A^{\prime}})-YW^{-1}X\right)\\
		&=2+\text{rank}\left((I_{n-2}+NI_{A^{\prime}})-YW^{-1}X\right)=2+\text{rank}\left((I_{n-2}+NI_{A^{\prime}})-QQ^{T}I_{A^{\prime}}\right)\\
		&=2+\text{rank}\left((I_{n-2}+(N+QQ^{T})I_{A^{\prime}}\right)
	\end{aligned}
	\]
   by Equation (\ref{eq1.9}). Thus we have that
   \begin{equation*} 
   	\sum_{A\in\mathbb{V}_3}z^{\text{rank}(I_n+MI_A)}=z^2 \sum_{A^{\prime}\subseteq V^{\prime}} z^{\text{rank}(I_{n-2}+(N+QQ^{T})I_{A^{\prime}})}=z^{n-\text{rank}(N+QQ^{T})}{P}_{\langle \delta\tau\delta \rangle}(N+QQ^{T},z).
   \end{equation*}
   It follows that
   \begin{equation}\label{eq11}
   	z^{\text{rank}(M)-n}\sum_{A\in\mathbb{V}_3} z^{\text{rank}(I_n+MI_A)}
   	=z^{\text{rank}(M)-\text{rank}(N+QQ^{T})}{P}_{\langle \delta\tau\delta \rangle}(N+QQ^{T},z).
   \end{equation}

	\textbf{Case 4. $A\in \mathbb{V}_4$}
	
	Since $u,v\in A$, we have that
	$I_n+MI_A=\left[\begin{array}{ccc}
		0 & 1 & P^{T}I_{A^{\prime}} \\
		1 & 1 & Q^{T}I_{A^{\prime}} \\
		P & Q & I_{n-2}+NI_{A^{\prime}} \\
	\end{array}
	\right],$ 
	where ${A^{\prime}}=A\backslash\{u,v\}$ and $I_{A^{\prime}}$ is indexed by $V^{\prime}$  ($A^{\prime}\subseteq V^{\prime}$). 
	We partition $I_n+MI_A$ as a new block matrix. Let 
	$I_n+MI_A=\left[\begin{array}{cc}
		W & X\\
		Y & Z \\
	\end{array}
	\right],$
	where $W=\left[\begin{array}{cc}
		0 & 1  \\
		1 & 1\\
	\end{array}
	\right],$
	$X=\left[\begin{array}{c}
		P^{T}I_{A^{\prime}} \\
		Q^{T}I_{A^{\prime}} \\
	\end{array}
	\right],$ 
	$Y=\left[\begin{array}{cc}
		P & Q \\
	\end{array}
	\right]$ and 
	$Z=\left[\begin{array}{c}
		I_{n-2}+NI_{A^{\prime}}
	\end{array}
	\right].$ 
	Since $W^{-1}=\left[\begin{array}{cc}
		1 & 1  \\
		1 & 0\\
	\end{array}
	\right],$ by Lemma \ref{calculaterank},  we can obtain that 
	\[\begin{aligned}
		\text{rank}(I_{n}+MI_A)&=\text{rank}(W)+\text{rank}(Z-YW^{-1}X)\\
		&=2+\text{rank}\left(\left[\begin{array}{c}
			I_{n-2}+NI_{A^{\prime}}
		\end{array}
		\right]-\left[\begin{array}{cc}
			P+Q & P \\
		\end{array}
		\right]\left[\begin{array}{c}
			P^{T}I_{A^{\prime}} \\ Q^{T}I_{A^{\prime}} \\
		\end{array}
		\right]
		\right)\\
		&=2+\text{rank}\left(\left[\begin{array}{cc}
			I_{n-2}+(N+PP^{T}+QP^{T}+PQ^{T})I_{A^{\prime}}
		\end{array}
		\right]
		\right).
	\end{aligned}
	\]
	Recall that $L=N+PP^{T}+QP^{T}+PQ^{T}$, 
	one has that 
	$\text{rank}(I_n+MI_A)=2+\text{rank}(I_{n-2}+LI_{A^{\prime}}).$ 
	By Equation (\ref{eq1.9}), we have that
	\begin{equation*}
		\sum_{A\in\mathbb{V}_4}z^{\text{rank}(I_n+MI_A)}= z^2\sum_{A^{\prime}\subseteq V^{\prime}}  z^{\text{rank}(I_{n-2}+LI_{A^{\prime}})}= z^{n-\text{rank}(L)}{P}_{\langle \delta\tau\delta \rangle}(L,z)
	\end{equation*}
	and
	\begin{equation}\label{eq12}
		z^{\text{rank}(M)-n}\sum_{A\in\mathbb{V}_4} z^{\text{rank}(I_n+MI_A)} =z^{\text{rank}(M)-\text{rank}(L)}{P}_{\langle \delta\tau\delta \rangle}(L,z).
	\end{equation}
	
 Combined with Equations (\ref{eq8}-\ref{eq12}), the result holds.
\end{proof}
\begin{Remark}
	Assume that $S$ is a simple signed graph with $\text{adj}(S)=M$. Let $V=V(S)$ and $u,v\in V$. The matrices $M[V\backslash\{v\}],N+QQ^T,L$ can be seen as the adjacent matriices of three graphs obtained by applying some graph operations on $S$ as follows: $M[V\backslash\{v\}]=\text{adj}(S[V\backslash\{v\}])$, $N+QQ^T=\text{adj}\left(S \ast v\right)[V\backslash \{u,v\}]$ and $H=\text{adj}\left(S \ast v\ast u\right)[V\backslash \{u,v\}]$ by Remark \emph{\ref{keylemma}}.
\end{Remark}

\begin{thm}\label{recurrence3}
	Let $M$ be an $n\times n$ symmetric matrix over $\mathbb{GF}(2)$, with rows and columns indexed by a finite set $V$. Assume that $u,v\in V$ and $V^{\prime}=V\backslash\{u,v\}$. If 
	\[M=\left[
	\begin{array}{ccc}
		1 & 1 & P^T \\
		1 & 1 & Q^T \\
		P & Q & N \\
	\end{array}
	\right],\]
	where the rows and columns of the block matrix $M$ are indexed by $\{u\},\{v\}$ and $V^{\prime}$, then
	\[
	\begin{aligned}
		P_{\langle \delta\tau\delta \rangle}(M,z) 
		&=-z^{\text{rank}(M)-\text{rank}(N)}P_{\langle \delta\tau\delta \rangle}(N,z) +z^{\text{rank}(M)-\text{rank}(L)}P_{\langle \delta\tau\delta \rangle}(L,z)\\
		&+ 
		z^{\text{rank}(M)-\text{rank}(M_v)}P_{\langle \delta\tau\delta \rangle}(M_v,z)+ 	z^{\text{rank}(M)-\text{rank}(M_u)}P_{\langle \delta\tau\delta \rangle}(M_u,z),\\
	\end{aligned}	\]
	where $M_u=M[V\backslash \{u\}],M_v=M[V\backslash \{v\}]$ and  $L=N+PQ^{T}+QP^{T}$.
\end{thm}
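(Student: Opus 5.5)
Following the proofs of Theorems \ref{recurrence1} and \ref{recurrence2}, we start from Equation (\ref{eq1.9}). This gives
\[P_{\langle\delta\tau\delta\rangle}(M,z)=z^{\operatorname{rank}(M)-n}\sum_{A\subseteq V}z^{\operatorname{rank}(I_n+MI_A)}.\]
We split the sum over the four classes $\mathbb{V}_0,\dots,\mathbb{V}_3$ according to membership of $u$ and $v$ in $A$, and throughout write $A'=A\cap V'$.

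\textbf{Case $u,v\notin A$.} The first two columns of $I_n+MI_A$ are $e_u,e_v$. Hence $\operatorname{rank}(I_n+MI_A)=2+\operatorname{rank}(I_{n-2}+NI_{A'})$, and this class contributes $z^{\operatorname{rank}(M)-\operatorname{rank}(N)}P_{\langle\delta\tau\delta\rangle}(N,z)$, exactly as in Case 1 of the earlier theorems.

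\textbf{Case $u\in A$, $v\notin A$.} The $u$-row of $I_n+MI_A$ is $(0,0,P^TI_{A'})$ and its $v$-row is $(1,1,Q^TI_{A'})$. The $v$-column is $e_v$, while the $u$-column is $(0,1,P)^T$. Subtracting the $v$-column from the $u$-column kills the $1$ in the $v$-row. After that, the $v$-row and $v$-column contribute exactly $1$ to the rank, and what remains is
\[\Gamma(A)=\begin{bmatrix}0&P^TI_{A'}\\ P&I_{n-2}+NI_{A'}\end{bmatrix}=I_{n-1}+M_vI_{\{u\}\cup A'},\]
since $M_v=\begin{bmatrix}1&P^T\\P&N\end{bmatrix}$. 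This is the same situation as Case 2 of Theorem \ref{recurrence2}. We complete the sum over $A'\subseteq V'$ to a sum over all $B\subseteq\{u\}\cup V'$ and subtract the $B\not\ni u$ part; that part has rank $1+\operatorname{rank}(I_{n-2}+NI_{A''})$. The class therefore contributes
\[z^{\operatorname{rank}(M)-\operatorname{rank}(M_v)}P_{\langle\delta\tau\delta\rangle}(M_v,z)-z^{\operatorname{rank}(M)-\operatorname{rank}(N)}P_{\langle\delta\tau\delta\rangle}(N,z).\]

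\textbf{Case $v\in A$, $u\notin A$.} By the symmetry $u\leftrightarrow v$, $P\leftrightarrow Q$ (the hypothesis is symmetric in $u,v$), this class contributes
\[z^{\operatorname{rank}(M)-\operatorname{rank}(M_u)}P_{\langle\delta\tau\delta\rangle}(M_u,z)-z^{\operatorname{rank}(M)-\operatorname{rank}(N)}P_{\langle\delta\tau\delta\rangle}(N,z).\]

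\textbf{Case $u,v\in A$.} The top-left $2\times2$ block of $I_n+MI_A$ is $W=\begin{bmatrix}0&1\\1&0\end{bmatrix}$, which is invertible with $W^{-1}=W$. By Lemma \ref{calculaterank} the rank equals
\[2+\operatorname{rank}\Bigl(I_{n-2}+NI_{A'}-[P\;Q]\,W\begin{bmatrix}P^TI_{A'}\\Q^TI_{A'}\end{bmatrix}\Bigr)=2+\operatorname{rank}\bigl(I_{n-2}+(N+PQ^T+QP^T)I_{A'}\bigr).\]
This class contributes $z^{\operatorname{rank}(M)-\operatorname{rank}(L)}P_{\langle\delta\tau\delta\rangle}(L,z)$.

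\textbf{Summing.} Adding the four contributions, the $N$-terms appear with coefficients $+1-1-1=-1$, which gives exactly the stated formula.

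The main obstacle is the mixed case $u\in A$, $v\notin A$: the rank reduction must be done carefully so that the residual matrix is literally $I_{n-1}+M_vI_{\{u\}\cup A'}$. The point to check is that the column operation does not disturb the $V'$ rows. It does not, because the $v$-column is $e_v$ in this case, so subtracting it changes only the $v$-row entry of the $u$-column.
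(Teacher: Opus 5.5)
Your proof is correct and follows the paper's argument essentially step for step. It uses the same four-way split by membership of $u,v$ in $A$, the Schur complement with the self-inverse block $W$ when $u,v\in A$, and the identification $\Gamma(A)=I_{n-1}+M_vI_{\{u\}\cup A'}$ followed by completing the sum over $B$ and subtracting the $B\not\ni u$ part in the mixed case. The only difference is that the paper pivots directly on the unit column $e_v$ via row and column swaps; your preliminary subtraction of the $v$-column from the $u$-column is harmless but unnecessary, since deleting the $v$-row and $v$-column already discards that entry.
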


\begin{proof}
	We consider $I_n+MI_A$ according to the different values of $i$ for which $A\in \mathbb{V}_i, i\in\{1,2,3,4\}$.

	\textbf{Case 1. $A\in \mathbb{V}_1$}
	
	Since $u,v\not\in A$, one has that $I_n+MI_A=\left[\begin{array}{ccc}
		1 & 0 & P^{T}I_{A^{\prime}} \\
		0 & 1 & Q^{T}I_{A^{\prime}} \\
		\mathbf{0} & \mathbf{0} & I_{n-2}+NI_{A^{\prime}} \\
	\end{array}
	\right],$
	where $A^{\prime}=A$ and $I_{A^{\prime}}$ is indexed by $V^{\prime}$  ($A^{\prime}\subseteq V^{\prime}$). It follows that $\text{rank}(I_n+MI_A)=2+\text{rank}(I_{n-2}+NI_{A^{\prime}})$ and	\[\sum_{A\in\mathbb{V}_1}z^{\text{rank}(I_n+MI_A)}\!=\! z^2\sum_{A^{\prime}\subseteq V^{\prime}} z^{\text{rank}(I_{n-2}+NI_{A^{\prime}})}\! = \! z^{2-[\text{rank}(N)-(n-2)]}{P}_{\langle \delta\tau\delta \rangle}(N,z) \!=\! z^{n-\text{rank}(N)}{P}_{\langle \delta\tau\delta \rangle}(N,z).\]
	Thus we have that
	\begin{align}\label{eq13.1}
		z^{\text{rank}(M)-n}\!\!\sum_{A\in\mathbb{V}_1}z^{\text{rank}(I_n+MI_A)} =z^{\text{rank}(M)-\text{rank}(N)}{P}_{\langle \delta\tau\delta \rangle}(N,z).
	\end{align}
	
	\textbf{Case 2. $A\in \mathbb{V}_2\cup \mathbb{V}_3$}
	
	If $A\in \mathbb{V}_2$, we have that $u\in A,v\not\in A$ and
	\[I_n+MI_A=\left[\begin{array}{ccc}
		0 & 0 & P^{T}I_{A^{\prime}} \\
		1 & 1 & Q^{T}I_{A^{\prime}} \\
		P & \mathbf{0} & I_{n-2}+NI_{A^{\prime}} \\
	\end{array}
	\right]
	\rightarrow 
	\left[\begin{array}{ccc}
		1 & 1 & Q^{T}I_{A^{\prime}} \\
		0 & 0 & P^{T}I_{A^{\prime}} \\
		P & \mathbf{0} & I_{n-2}+NI_{A^{\prime}} \\
	\end{array}
	\right]
	\rightarrow 
	\left[\begin{array}{ccc}
		1 & 1 & Q^{T}I_{A^{\prime}} \\
		0 & 0 & P^{T}I_{A^{\prime}} \\
		\mathbf{0} & P & I_{n-2}+NI_{A^{\prime}} \\
	\end{array}
	\right],\]
	through elementary operations that swap rows and columns, where ${A^{\prime}}=A\backslash\{u\}$ and $I_{A^{\prime}}$ is indexed by $V^{\prime}$  ($A^{\prime}\subseteq V^{\prime}$). 
	One has $\text{rank}(I_n+MI_A)=1+\text{rank}(\Gamma(A))$, where
	$	\begin{aligned}
		\Gamma(A)=
		\begin{array}{c@{}c}
			\left[\begin{array}{cc}
				0 & P^{T}I_{A^{\prime}} \\
				P & I_{n-2}+NI_{A^{\prime}} \\
			\end{array}
			\right].
		\end{array}
	\end{aligned}$ 
	Since $u\in A$ and 
	\[\Gamma(A)=I_{n-1}+
	\left[\begin{array}{cc}
		1 & P^{T} \\
		P & N \\
	\end{array}
	\right]
	\left[\begin{array}{cc}
		1 & \mathbf{0} \\
		\mathbf{0} & I_{A^{\prime}} \\
	\end{array}
	\right]
	=I_{n-1}+
	M[V\backslash \{v\}]I_{A}=I_{n-1}+
	M_vI_{A},\] 
    it follows that
	\begin{align*}
		\sum_{A\in\mathbb{V}_2}z^{\text{rank}(I_n+MI_A)}&=z\sum_{{B}\subseteq V\backslash \{v\},u\in B}z^{\text{rank}(I_{n-1}+M_vI_{B})}\\
		&=z\sum_{{B^{\prime}}\subseteq V\backslash \{v\}}z^{\text{rank}(I_{n-1}+M_vI_{B^{\prime}})}-z\sum_{{B^{\prime\prime}}\subseteq V^{\prime}}z^{\text{rank}(I_{n-1}+M_vI_{{B^{\prime\prime}}})}.
	\end{align*}
	For any ${B^{\prime\prime}}\subseteq V^{\prime}$, since 
	$I_{n-1}+M_vI_{B^{\prime\prime}} =I_{n-1}+M_v 
	\begin{array}{c@{}c}
		\left[\begin{array}{cc}
			0 & \mathbf{0} \\
			\mathbf{0} & I_{B^{\prime\prime}} \\
		\end{array}
		\right]
	\end{array}
	=
	\begin{array}{c@{}c}
		\left[\begin{array}{cc}
			1 & P^{T}I_{B^{\prime\prime}} \\
			\mathbf{0} & I_{n-2}+NI_{B^{\prime\prime}} \\
		\end{array}
		\right],
	\end{array}$
	we have
	\[\text{rank}(I_{n-1}+M_vI_{{B^{\prime\prime}}})=1+\text{rank}(I_{n-2}+NI_{B^{\prime\prime}}),\]
	which implies that 
	\begin{align*}
		\sum_{A\in\mathbb{V}_2}z^{\text{rank}(I_n+MI_A)}&=z\sum_{{B^{\prime}}\subseteq V\backslash \{v\}}z^{\text{rank}(I_{n-1}+M_vI_{B^{\prime}})}-z^2\sum_{{B^{\prime\prime}}\subseteq V^{\prime}}z^{\text{rank}(I_{n-2}+NI_{B^{\prime\prime}})}\\
		&=z^{n-\text{rank}(M_v)}{P}_{\langle \delta\tau\delta \rangle}(M_v,z)-z^{n-\text{rank}(N)}{P}_{\langle \delta\tau\delta \rangle}(N,z)
	\end{align*}
	by Equation (\ref{eq1.9}). Thus one has that
	\begin{align}\label{eq14.2}
		 z^{\text{rank}(M)-n}\sum_{A\in\mathbb{V}_2} z^{\text{rank}(I_n+MI_A)} = \left( z^{-\text{rank}({M_v})} {P}_{\langle \delta\tau\delta \rangle}({M_v},z)  -  z^{-\text{rank}(N)} {P}_{\langle \delta\tau\delta \rangle}(N,z)\right) z^{\text{rank}(M)}. 
	\end{align}
	By symmetry of $u$ and $v$, we have that
	\begin{equation}\label{eq14.3}
		 z^{\text{rank}(M)-n}\sum_{A\in\mathbb{V}_3} z^{\text{rank}(I_n+MI_A)} = \left( z^{-\text{rank}({M_u})} {P}_{\langle \delta\tau\delta \rangle}({M_u},z)  -  z^{-\text{rank}(N)} {P}_{\langle \delta\tau\delta \rangle}(N,z)\right) z^{\text{rank}(M)}. 
	\end{equation}

	\textbf{Case 3. $A\in \mathbb{V}_4$}
	
	Since $u,v\in A$, we have that
	$I_n+MI_A=\left[\begin{array}{ccc}
		0 & 1 & P^{T}I_{A^{\prime}} \\
		1 & 0 & Q^{T}I_{A^{\prime}} \\
		P & Q & I_{n-2}+NI_{A^{\prime}} \\
	\end{array}
	\right],$ 
	where ${A^{\prime}}=A\backslash\{u,v\}$ and $I_{A^{\prime}}$ is indexed by $V(S)\backslash \{u,v\}$  ($A^{\prime}\subseteq V^{\prime}$).
	We partition $I_n+MI_A$ as a new block matrix. 
	
	Let 
	$I_n+MI_A=\left[\begin{array}{cc}
		W & X\\
		Y & Z \\
	\end{array}
	\right],$
	where $W=\left[\begin{array}{cc}
		0 & 1  \\
		1 & 0\\
	\end{array}
	\right],$
	$X=\left[\begin{array}{c}
		P^{T}I_{A^{\prime}} \\
		Q^{T}I_{A^{\prime}} \\
	\end{array}
	\right],$ 
	$Y=\left[\begin{array}{cc}
		P & Q \\
	\end{array}
	\right]$ and 
	$Z=\left[\begin{array}{c}
		I_{n-2}+NI_{A^{\prime}}
	\end{array}
	\right].$ 
	By Lemma \ref{calculaterank}, since $W^{-1}=\left[\begin{array}{cc}
		0 & 1  \\
		1 & 0\\
	\end{array}
	\right],$ we can obtain that 
	\[\begin{aligned}
		\text{rank}(I+MI_A)&=\text{rank}(W)+\text{rank}(Z-YW^{-1}X)\\
		&=2+\text{rank}\left(\left[\begin{array}{c}
			I_{n-2}+NI_{A^{\prime}}
		\end{array}
		\right]-\left[\begin{array}{cc}
			Q & P \\
		\end{array}
		\right]\left[\begin{array}{c}
			P^{T}I_{A^{\prime}} \\ Q^{T}I_{A^{\prime}} \\
		\end{array}
		\right]
		\right)\\
		&=2+\text{rank}\left(\left[\begin{array}{cc}
			I_{n-2}+(N+QP^{T}+PQ^{T})I_{A^{\prime}}
		\end{array}
		\right]
		\right).
	\end{aligned}
	\]
	Recall that $N+QP^{T}+PQ^{T}=L$, by Equation (\ref{eq1.9}), one has that 
	\[\sum_{A\in\mathbb{V}_4}z^{\text{rank}(I_n+MI_A)} = z^2 \sum_{A^{\prime}\subseteq V^{\prime}}  z^{\text{rank}(I_{n-2}+LI_{A^{\prime}})} = z^{n-\text{rank}(L)}{P}_{\langle \delta\tau\delta \rangle}(L,z)\]
	and
	\begin{equation}\label{eq16}
		z^{\text{rank}(M)-n}\sum_{A\in\mathbb{V}_4} z^{\text{rank}(I_n+MI_A)} =z^{\text{rank}(M)-\text{rank}(L)}{P}_{\langle \delta\tau\delta \rangle}(L,z).
	\end{equation}
	
	Combined with Equations (\ref{eq13.1}-\ref{eq16}), the result holds. 
\end{proof}
\begin{Remark}\label{Re3}
	Assume that $S$ is a simple signed graph with $\text{adj}(S)=M$. Let $V=V(S)$ and $u,v\in V$. The matrices $N,M[V\backslash\{v\}],M[V\backslash\{u\}],L$ can be seen as the adjacent matrices of four graphs obtained by applying some graph operations on $S$ as follows: 
	$N=\text{adj}(S[V\backslash\{u,v\}])$, $M[V\backslash\{v\}]=\text{adj}(S[V\backslash\{v\}])$, $M[V\backslash\{u\}]=\text{adj}(S[V\backslash\{u\}])$  and $L=\text{adj}\left((((S+{\{u,v\}}) \ast u)\ast v)\ast u \right)[V\backslash \{u,v\}]$ by Remark \emph{\ref{keylemma},} where $S+\{u,v\}$ obtained by only changing the sign of vertices $u$ and $v$.
\end{Remark}
\begin{Remark}
	As the partial-\(\langle \delta\tau\delta  \rangle\) polynomial of matrices generalizes those of bouquets, the recurrence relations in Theorems \emph{\ref{recurrence1}-\ref{recurrence3}} can be used for the computation the polynomial of bouquets.
\end{Remark}

\section{Conclusion}

In this paper, we introduced a paired-matrix framework for the local operations \(\delta\) and \(\tau\). This framework provides a matrix realization of these operations without requiring the principal submatrices involved in the ordinary principal pivot transform to be nonsingular. More precisely, for every \(A\subseteq V\) and every \(\bullet\in\{\delta,\tau,\delta\tau,\tau\delta,\delta\tau\delta\}\), the rank-sum parameter
\[\operatorname{rank}(L)+\operatorname{rank}(R)-n,
\qquad (L,R)=\bullet_A(M,I_n),\]
recovers the exponent \(r_{\langle\bullet\rangle}(M,A)\) of the corresponding partial-twuality polynomial. Thus, our construction gives a rank-statistic answer to Problem \ref{Problem1}. We also established three recurrence relations for the partial-\(\langle\delta\tau\delta\rangle\) polynomial of simple signed graphs with respect to an edge (Theorems \ref{recurrence1}-\ref{recurrence3} and Remarks \ref{Re1}-\ref{Re3}). These recurrences reduce its computation to polynomials of smaller signed graphs and provide a practical tool for recursive evaluation.

\section*{Acknowledgment}

\indent
This work was partially supported by the National Natural Science Foundation of China
(Nos. 12471321 and 12331013), Fundamental Research Funds for the Central Universities (No.
KYAJBRC26004536) and Hebei Provincial Key Laboratory of Mathematical Theory and Analysis
for Network and Data Science.

\end{document}